\newcommand{\bZ}{{\mathbb Z}}
\newcommand{\bC}{{\mathbb C}}
\newcommand{\la}{{\langle}}
\newcommand{\ra}{{\rangle}}
\newtheorem{thm}{Theorem}[section]
\newtheorem{lemma}[thm]{Lemma}
\newtheorem{cor}[thm]{Corollary}
\newtheorem{conj}[thm]{Conjecture}
\numberwithin{equation}{section}
\begin{document}

\title[]{Chow ring and BP-theory of the extraspecial $2$-group of order $32$}
 
\author{N.Yagita}

\address{ faculty of Education, 
Ibaraki University,
Mito, Ibaraki, Japan}
 
\email{ yagita@mx.ibaraki.ac.jp, }

\keywords{Chow ring, BP-theory, extraspecial $2$-group}
\subjclass[2000]{ 55N20, 55R12, 55R40}

\begin{abstract}
 We write down the mod $2$ Chow ring of the classifying space of
$G=2_+^{1+4}=D_8\cdot D_8$,  which has nilpotent elements.
\end{abstract}

\maketitle
   
\section{Introduction}

Let $p$ be a prime number.
Let $G$ be a $p$-group and $BG$ its classifying space.
Let us write simply  by $H^*(G;\bZ/p)=H^*(BG;\bZ/p)$ the mod $p$ 
cohomology of the group $G$, and 
by $CH^*(G)=CH^*(BG)$ the Chow ring of the classifying space
$BG$ over the complex number field $\bC$.

In this paper, we write down the (most ease) case where
$CH^*(G)/2$ has nonzero nilpotent elements
(but $H^*(G;\bZ/2)$ has not).
Note that  Chow rings $CH^*(G)/p$ 
for all $G$ with $|G|\le p^4$ are 
still computed by Totaro in [To2].
Let $D(2)=2_+^{1+4}=D_8\cdot D_8$ be the extraspecial $2-$group (of order $2^5$)
which is the central product of two dihedral groups $D_8$.
\begin{thm}
 There are ring isomorphisms 
\[CH^*(D(2))/2\cong (H^*(D(2);\bZ/2))^2\oplus \bZ/2[c_4]\{t''\}\]
\[ \cong (\bZ/2[y_1,y_2,y_3,y_4]/(q_0',q_1')
\oplus \bZ/2\{t''\})
\otimes \bZ/2[c_4]\]
where $deg(y_i)=1,$ $deg(c_4)=4$, $deg(t'')=2$,  and $q_0'=y_1y_2+y_3y_4$, 
\[q_1'=Sq^2(q_0')=y_1^2y_2+y_1y
_2^2+y_3^2y_4+y_3y_4^2.\]
  The multiplications are given
$(t'')^2=y_it''=0$ for all $1\le i\le 4$.
\end{thm}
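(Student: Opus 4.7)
The plan is to determine $CH^*(D(2))/2$ by analyzing the cycle class map
\[cl:CH^*(D(2))/2\longrightarrow H^{2*}(D(2);\bZ/2)\]
on its image and its kernel separately. The input is Quillen's description
\[H^*(D(2);\bZ/2)\cong \bZ/2[y_1,y_2,y_3,y_4,c_4]/(q_0,q_1),\]
with $\deg y_i=1$, $\deg c_4=4$, $q_0=y_1y_2+y_3y_4$, and $q_1=Sq^1 q_0$; the $y_i$ come from $H^1$ of the maximal elementary abelian quotient, and $c_4$ is the top Chern class of the faithful $4$-dimensional irreducible complex representation.

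For the image of $cl$, I would lift each $y_i\in H^1$ to a complex $1$-dimensional character $\chi_i$, so that $c_1(\chi_i)\in CH^1(BG)$ reduces mod $2$ to $y_i^2\in H^2$. Together with $c_4\in CH^4$, these classes identify (after re-indexing by Chow degree) with the squared subring $(H^*(D(2);\bZ/2))^2$. The Chow relations $q_0'$ and $q_1'$ in the statement correspond under $cl$ to $q_0^2$ and $q_1^2$ in the image, which are automatic consequences of $q_0=q_1=0$, so the polynomial summand of the claim is exactly the image of $cl$.

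For the kernel, which is the nilpotent ideal, I would invoke the $BP$-theoretic description of $BP^*(BD(2))$ together with Totaro's surjection $BP^*(BG)\otimes_{BP^*}\bZ \twoheadrightarrow CH^*(BG)$. A single $v_1$-torsion class in $BP^*(BD(2))$ should descend to the generator $t''\in CH^2(BG)/2$ of the nilpotent ideal, viewed as a $\bZ/2[c_4]$-module. The relations $(t'')^2=0$ and $y_it''=0$ would follow from degree and weight considerations and from the known module structure of $BP^*(BG)$: the products $y_it''$ would have to arise from further $v_i$-torsion classes in $BP^*(BG)$ which are ruled out in the relevant degrees.

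The main obstacle is to pin down the kernel of $cl$ precisely: one must simultaneously show that $t''$ is nonzero in $CH^*(BG)/2$, that multiplication by $c_4$ is injective on the $\bZ/2\{t''\}$ summand (so that $\bZ/2[c_4]\{t''\}$ is free, with no truncation in the $c_4$-direction), and that no additional nilpotent classes appear at any degree. All three points rest on a careful analysis of the $v_i$-torsion in $BP^*(BD(2))$ and of how that torsion collapses under the Totaro surjection; this is where the bulk of the detailed calculation will sit.
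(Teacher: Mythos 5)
Your proposal is a plan rather than a proof, and the plan has several concrete gaps that the paper's actual argument is specifically designed to close. First, the map you call ``Totaro's surjection'' goes the wrong way: the Totaro cycle map is $CH^*(BG)\to MU^*(BG)\otimes_{MU^*}\bZ$ (equivalently $cl_{BP}:CH^*/p\to BP^*(BG)\otimes_{BP^*}\bZ/p$), and the assertion that it is an isomorphism is exactly the Totaro conjecture, which is Theorem 1.2 of this paper and is proved \emph{after} (and using) Theorem 1.1. Basing the computation of the kernel of $cl$ on a detailed knowledge of $BP^*(BD(2))$ and on a surjection from it onto $CH^*$ is therefore circular in the paper's logic, besides resting on a map that is not known to exist in that direction. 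Second, you never bound $CH^*(D(2))/2$ from above. The paper gets its generation statement from Totaro's theorem (Theorem 11.1 in [To2]) that for a $p$-group with a faithful complex representation of dimension at most $p+2$ the mod $p$ Chow ring consists of transferred Euler classes; combined with a transfer computation from the maximal subgroup $M\cong D_8\times\bZ/2$ this yields that $CH^*(D(2))/2$ is generated by $y_1,\dots,y_4$, $c_4$ and a single transferred class $t_2$. Without such a statement you cannot conclude that the image of $cl$ is only the Chern subring, nor that the kernel is only $\bZ/2[c_4]\{t''\}$.

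Third, showing that $q_0'$ and $q_1'$ map to zero in $H^*(D(2);\bZ/2)$ does not show they vanish in $CH^*(D(2))/2$: an element killed by $cl$ could be a nonzero nilpotent, which is the whole point of this example. The paper's Lemma 2.1 proves $q_0'=0$ in $CH^2(D(2))/2$ using the injectivity of the \emph{integral} cycle map in degree $2$ together with the Harada--Kono computation of the integral cohomology, and then gets $q_1'=Sq^2(q_0')=0$ from the Steenrod operations on Chow groups. Finally, the relations $(t'')^2=0$ and $y_it''=0$ cannot follow from ``degree and weight considerations'' alone, since nonzero classes exist in those degrees; the paper proves $y_4t''=0$ by the projection formula ($y_4|_M=0$, so $y_4t_2=Tr_M^{D(2)}(y_4|_M\cdot c_2)=0$) plus the absence of nilpotents in $H^*(D(2);\bZ/2)$, extends this to all $y_it''=0$ via the action of the outer automorphism group permuting the $y_i$, and gets $(t'')^2=Tr_M^{D(2)}(t''|_M\cdot c_2)=0$ because $CH^*(M)/2$ has no nonzero nilpotents. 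The nonvanishing of $t''$ and the freeness over $\bZ/2[c_4]$ are read off from the restriction to the cyclic subgroup $N'=\la a_1a_2\ra\cong\bZ/4$, where $t''|_{N'}=2(u')^2$. These transfer and restriction arguments are the substance of the proof and are absent from your outline.
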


Let $BP^*(G)=BP^*(BG)$ be the Brown-Peterson  theory
with the coefficient $BP^*=\bZ_{(p)}[v_1,v_2,...]$ and 
$|v_i|=-2(p^i-1)$ (for detalis of the $BP$-theory, see [Ha]
or [Ra]).
We also show the mod $2$ Totaro conjecture ([To1]) ;
\begin{thm}
The mod $2$ Totaro conjecture holds for $D(2)$, that is 
 \[CH^*(D(2))/2\cong BP^*(D(2))\otimes_{BP^*}\bZ/2.\]
\end{thm}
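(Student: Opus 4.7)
The strategy is to compute the right-hand side $BP^*(D(2))\otimes_{BP^*}\bZ/2$ directly and match it term by term with Theorem~1.1. Recall that Totaro's cycle map
\[
\rho\colon CH^*(BG)/p\longrightarrow BP^*(BG)\otimes_{BP^*}\bZ/p
\]
is always defined and fits into a commutative triangle with the cycle maps to $H^{2*}(BG;\bZ/p)$. A key feature of our group is that $H^*(D(2);\bZ/2)$ is reduced, so the nilpotent summand $\bZ/2[c_4]\{t''\}$ of $CH^*(D(2))/2$ lies entirely in the kernel of the topological cycle map, and the matching claim in $BP$-theory must be proved independently.

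The first step is to identify the non-nilpotent part of $BP^*(D(2))\otimes_{BP^*}\bZ/2$. Applying the Atiyah--Hirzebruch spectral sequence
\[
E_2^{*,*}=H^*(D(2);\bZ/2)\otimes BP^*\Longrightarrow BP^*(D(2)),
\]
together with Quillen's stratification on the maximal elementary abelian $2$-subgroups of $D(2)$ (which have rank $3$), one recovers the polynomial subring $(H^*(D(2);\bZ/2))^2\otimes\bZ/2[c_4]$. The higher differentials are governed by the Milnor operations $Q_0$ and $Q_1$ acting on the cohomological relations $q_0'=y_1y_2+y_3y_4$ and $q_1'=Sq^2(q_0')$.

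The second step is to construct and identify the nilpotent class $t''$. The class $t''\in CH^2(D(2))/2$ lifts to a class in $BP^2(D(2))$ (for instance, via a suitable transfer from a proper subgroup, or as a Chern class of a $2$-dimensional representation annihilated by the restriction map to any elementary abelian), whose image in $BP^*(D(2))\otimes_{BP^*}\bZ/2$ is the desired nilpotent element. The relations $(t'')^2=y_i t''=0$, already forced by the fact that $H^{2*}(D(2);\bZ/2)$ is reduced, propagate to the $BP$-theoretic quotient. A Poincar\'e series comparison in each degree then upgrades $\rho$ to a bijection.

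The main obstacle is ruling out extra nilpotent contributions. A priori, the map $BP^*(D(2))\otimes_{BP^*}\bZ/2\to H^{2*}(D(2);\bZ/2)$ can have kernel strictly larger than the span of $t''$ and its multiples by $c_4$, arising from hidden extensions in the AH spectral sequence or from non-trivial $v_n$-torsion classes that collapse under the $BP^*$-augmentation. Verifying that no such extra nilpotents occur for $D(2)=D_8\cdot D_8$ is the technical heart of the argument and should use the explicit presentation of $H^*(D(2);\bZ/2)$ with only the two relations $q_0',q_1'$, together with the known $Q_i$-action on the Dickson-type generator $c_4$, to bound the kernel of the cohomology map precisely by $\bZ/2[c_4]\{t''\}$.
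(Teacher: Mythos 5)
Your strategy --- compute $BP^*(D(2))\otimes_{BP^*}\bZ/2$ directly from the Atiyah--Hirzebruch spectral sequence and match it against Theorem 1.1 --- is genuinely different from the paper's, but the two places you yourself flag as needing work are exactly where the argument is incomplete, and they are not routine verifications. First, you never establish that $BP^*(D(2))\otimes_{BP^*}\bZ/2$ is no larger than $(H^*(D(2);\bZ/2))^2\otimes\bZ/2[c_4]$ plus the span of $t''$; you say this ``should'' follow from the AHSS, Quillen stratification, and the $Q_i$-action on $q_0'$, $q_1'$, $c_4$, but running that spectral sequence to completion for this group (including hidden extensions and possible $v_n$-torsion) is precisely the hard computation one wants to avoid. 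The paper sidesteps it entirely: $K(n)^*(BD(2))$ is even-dimensional and generated by transferred Euler classes for all $n$ (Schuster--Yagita), hence by Ravenel--Wilson--Yagita $BP^*(D(2))$ is generated by transferred Euler classes, all of which lift to $CH^*(D(2))$; this gives surjectivity of the Totaro map $cl_{BP}$ with no spectral sequence work. Injectivity then reduces, via the $F$-isomorphism theorem of [Ya2] (so $\Ker(cl_{BP})$ is nilpotent) and the fact that the nilpotents of $CH^*(D(2))/2$ are exactly $\bZ/2[c_4]\{t''\}$, to showing that this one summand survives.

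Second, and more seriously, you assert that the image of $t''$ in $BP^*(D(2))\otimes_{BP^*}\bZ/2$ ``is the desired nilpotent element'' without showing it is nonzero, let alone that it generates a free $\bZ/2[c_4]$-module. This is not automatic: a lift of $t''$ to $BP^*(D(2))$ could a priori lie in the ideal $(v_1,v_2,\dots)BP^*(D(2))$ and die in the quotient, and no Poincar\'e series comparison is available until the right-hand side has been computed, which is the very thing in question. The paper proves nonvanishing by restriction: $t''|_M$ is a $BP^*$-module generator of $BP^*(M)^{\la a_4\ra}$, and restricting further to $N'=\la a_1a_2\ra\cong\bZ/4$ one finds $t''|_{N'}=2(u')^2$ and $c_4|_{N'}=(u')^4$, so that $\bZ/2[c_4]\{t''\}$ maps onto a summand of $BP^*(N')\otimes_{BP^*}\bZ/2$ with $BP^*(N')\cong BP^*[u']/([4](u'))$, which visibly survives the tensor. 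You would need to supply an argument of this kind (or actually complete the AHSS computation) for your proposal to close.
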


Let us write by $\Omega^*(G)$ the $BP$-version $\Omega^*(BG)\otimes_{MU^
*}BP^*$
of the algebraic cobordism $\Omega^*(BG)=MGL^{2*,*}(BG)$
([Vo1,2], [Le-Mo1,2]).
Let $t_{\bC}: \Omega^*(X)\to BP^{2*}(X(\bC))$ be the realization map.
There is a conjecture such that ;
\begin{conj} 
The  realization map $t_{\bC}$ is an isomorphism for
each algebraic group $G$, e.g. $\Omega^*(BG)\cong BP^*(BG)$. \end{conj}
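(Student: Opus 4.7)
The plan is to reduce the conjecture to the case of finite groups via the structure theory of complex algebraic groups, and then outline a strategy for the hardest remaining case. For an arbitrary algebraic group $G$ over $\bC$, the Levi decomposition writes $G = L \ltimes U$ with $U$ unipotent and $L$ reductive. Since unipotent groups over $\bC$ are isomorphic as varieties to affine spaces, the fibration $BG \to BL$ induces isomorphisms on both $\Omega^*$ and $BP^*$ compatible with the realization map, reducing to reductive $G$. Writing further $L = L^0 \rtimes \pi_0(L)$ and using a Hochschild--Serre type spectral sequence in both theories splits the problem into connected reductive groups and finite groups, to be treated separately.

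For connected reductive $G$ with split maximal torus $T$ and Weyl group $W$, the comparison passes through $BT$. One has $\Omega^*(BT) \cong BP^*[[t_1,\ldots,t_r]]$ as a formal power series ring in Chern roots, and topologically $BP^*(BT)$ admits the same presentation, with the realization map carrying Chern roots to Chern roots. Generalizing the Chow-theoretic results of Edidin--Graham and Totaro, one should show that the restriction $\Omega^*(BG) \to \Omega^*(BT)^W$ is an isomorphism (at least after suitable completion), matching the classical identification $BP^*(BG) \cong BP^*(BT)^W$ due essentially to Landweber. This settles the connected reductive case.

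The main obstacle is the finite group case, especially finite $p$-groups, where both $\Omega^*(BF)$ and $BP^*(BF)$ may carry substantial nilpotent ideals whose structure is delicate (Theorem 1.1 exhibits such behaviour for $D(2)$). The intended strategy is a motivic Atiyah--Hirzebruch spectral sequence converging to $\Omega^*$ from motivic cohomology, compared via the realization map with the topological Atiyah--Hirzebruch spectral sequence converging to $BP^*$; one must then control the higher-weight motivic cohomology of $BF$ and show the differentials and extensions match those in the topological sequence. Absent a uniform theorem, the fallback is case-by-case verification: approximate $BF$ by smooth quasi-projective varieties $(V\setminus Z)/F$ as $V$ ranges over faithful complex representations with bad locus $Z$, then apply the localization long exact sequences in $\Omega^*$ and $BP^*$ and induct using subgroups and quotients, as is effectively done in establishing Theorem 1.2. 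Even with the substantial evidence provided by Theorem 1.2 and Totaro's computations for $|G|\leq p^4$, a proof of the general conjecture appears beyond current techniques, and the realistic goal of this line of work is to accumulate further nontrivial cases to test and refine it.
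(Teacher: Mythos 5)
The statement you were asked to prove is labelled \textbf{Conjecture} 1.3 in the paper, and the paper does not prove it: the author explicitly says the conjecture is open even for $D(2)$, and the entire content of \S 4 is a conditional criterion (Lemma 4.1: if $G$ sits in an extension $1\to M\to G\to \bZ/p^s\to 1$ with $E_2^{odd,*'}=0$ in the induced spectral sequence, the mod $p$ Totaro conjecture holds for $G$, and Conjecture 1.3 holds for $M$, then it holds for $G$) together with verifications of that criterion for metacyclic groups with $m-\ell=1$, the groups $C(r)$ and $G(r+1,e)$, and iterated wreath products $\bZ/p\wr\cdots\wr\bZ/p$. Your proposal is likewise not a proof --- you say so yourself in the last sentence --- so neither text establishes the statement; the honest assessment is that no proof exists on either side.

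Beyond that, several intermediate claims in your reduction are false or unsupported as stated. The assertion that $BP^*(BG)\cong BP^*(BT)^W$ for connected reductive $G$ ``due essentially to Landweber'' holds only for groups whose integral cohomology is torsion-free; it fails precisely for the torsion groups $PGL_p$, $G_2$, $Spin_7$ that appear on the paper's list of known cases, which is why those cases required separate arguments in [Ya2,3] rather than a torus-invariants comparison. The splitting $L=L^0\rtimes\pi_0(L)$ need not exist, and even when it does, a spectral-sequence comparison does not automatically reduce an isomorphism statement to the two pieces: controlling differentials and extensions in both the motivic and topological sequences is exactly where the difficulty lies (this is the role of the hypothesis $E_2^{odd,*'}=0$ in the paper's Lemma 4.1, which is a genuine restriction, not a formality). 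Finally, your appeal to Theorem 1.2 as evidence conflates the mod $2$ Totaro conjecture $CH^*/2\cong BP^*\otimes_{BP^*}\bZ/2$ with Conjecture 1.3 about $\Omega^*\to BP^*$; the paper is careful to keep these separate, and indeed proves the former for $D(2)$ while leaving the latter open (Corollary 4.2 makes it conditional on the vanishing of $E_2^{odd,*'}$). If you want to salvage something provable, the right target is the paper's Lemma 4.1 and its corollaries, not the conjecture itself.
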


It is known the above conjecture is true for
 connected groups ([To1], [Ya2,3]);
$O_n,\ SO_n, PGL_p,G_2,Spin_7$.
As for finite groups $G$,  the above conjecture is known to be true for
abelian groups and the extraspecial $p$-groups of order $p^3$, i.e.  $p_{+}^{1+2},p_{-}^{1+2}$
for all primes [Ya4].  While the author can not see
this conjecture for $D(2)$, 
 in the last section, we add some notes 
 for groups satisfying the above conjecture.

\section{The Chow ring of $D(2)$}

The group $D(2)$ is isomorphic to the extraspecial $2$-group
$2_+^{1+2}$,
which  has the central extension
\[ 1\to N\to D(2)\to Q\to 1,\quad N\cong \bZ/2,\ Q\cong (\bZ/2)^4.\]
We use notations such that $N\cong \la c\ra, Q\cong \la a_1,a_2,a_3,a_4\ra$ and 
\[ D(2)\cong \la a_1,...,a_4,c|a_1^2=...=a_4^2=c^2=1,\]
\[ \qquad \qquad  [a_1,a_2]=[a_3,a_4]=c=(a_1a_2)^2=(a_3a_4)^2\ra.\]

The mod $2$ cohomology is given by Quillen [Qu1]
\[ H^*(D(2);\bZ/2)
\cong \bZ/2[x_1,x_2,x_3,x_4]/(q_0,q_1)
\otimes \bZ/2[w_4]\]
where $q_0=x_1x_2+x_3x_4$ and $q_1=Sq^1q_0=x_1^2x_2+x_1x_2^2+x_3^2x_4+x_3x_4^2$.
Here $x_i$ (and $w_4$) are  Stiefel-Whitney classes for some real representations, and hence the powers are Chern classes,
that is,
\[y_i=x_i^2=c_1(e_i),\quad 
e_i:D(2)\to \la a_i\ra \to \bC^\times\]
where $e_i$ is the nonzero linear representation, and  
\[ c_4=(w_4)^2=c_4(\eta) ;\quad  \eta=Ind_H^D(e), \]
where $H=\la c,a_1,a_3\ra$ is the maximal elementary abelian $2$-subgroup of $D(2)$
and $e:H \to \la c\ra \to \bC^\times$ is a nonzero linear representation.
We note that $H^*(D(2);\bZ/2)$ has no nonzero nilpotent elements ([Qu1]).

It is well known (e.g., [Qu1]) that each irreducible representation of an extraspecial $p$-group $P$
is a linear representation or 
just one induced representation of a linear representation of a maximal elementary abelian $p$-group of $P$.
Hence the Chern subring (the subring of $H^*(D(2);\bZ/2)$ multiplicatively generated by Chern classes)
is
\[ Ch(H^*(D(2);\bZ/2))\cong H^*(D(2);\bZ/2)^2\]
\[\cong \bZ/2[y_1,...,y_4]/(q_0',q_1')\otimes \bZ/2[c_4]\]
where $q_0'=y_1y_2+y_3y_4$ and $q_1'=Sq^2q_0'=y_1^2y_2+y_1y_2^2+y_3^2y_4+y_3y_4^2$.

Now we start to consider the Chow ring of $BD(2)$. 
In this paper we write $CH^*(BD(2))$ by $CH^*(D(2))$ (we also write $BP^*(BD(2))$ by $ BP^*(D(2))$).

Moreover we note following facts (see [To1] for details).  By the Rieman-Roch theorem without denominator,
$CH^2(D(2))/2$ is generated by $2nd$ Chern classes (of some representations), that means,
it is generated by  $y_iy_j$ and $c_2(\eta)$.  

\begin{lemma} We have  $q_0'=y_1y_2+y_3y_4=0 \in CH^2(D(2))/2$ and
\[ CH^2(D(2))/2\cong \bZ/2\{y_iy_j|1\le i,j\le 4\}/(q_0')\oplus \bZ/2
\{c_2(\eta)\}\]
where $A\{a,b,..\}$ means the free $A$-module 
generated by $a,b,...$.
\end{lemma}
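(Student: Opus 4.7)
The plan is to identify the generators of $CH^2(D(2))/2$ via Riemann--Roch, then extract the single relation $q_0'=0$ from the central-product structure $D(2)=D_8\cdot D_8$ and Totaro's computation for $D_8$, and read off the direct-sum splitting.

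By the Riemann--Roch theorem without denominators invoked just before the lemma, $CH^2(D(2))/2$ is generated by second Chern classes of complex representations. Each representation of $D(2)$ decomposes into the sixteen linear characters $\prod e_i^{\varepsilon_i}$ and copies of the faithful four-dimensional representation $\eta$; since $c_1(\eta)=0$ (the determinant $\det\eta$ is trivial, as one verifies on the generators $a_i$ using the Mackey decomposition of $\eta|_H$ into the four characters of $H$ nontrivial on the center $\langle c\rangle$), the Whitney sum formula reduces every such second Chern class to a $\bZ/2$-combination of the ten monomials $y_iy_j$ and $c_2(\eta)$. The resulting surjection $\bZ/2\{y_iy_j\}\oplus\bZ/2\{c_2(\eta)\}\twoheadrightarrow CH^2(D(2))/2$ bounds the dimension of the target by eleven.

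For the relation $q_0'=y_1y_2+y_3y_4=0$, I would pull back along the central-product surjection $\pi\colon D_8\times D_8\twoheadrightarrow D(2)$. Totaro's formula $CH^*(BD_8)/2\cong\bZ/2[y_1,y_2,c_2(\rho)]/(y_1y_2)$ together with K\"unneth gives $\pi^*(y_1y_2)=\pi^*(y_3y_4)=0$, hence $\pi^*(q_0')=0$. The chief obstacle is to promote this to $q_0'=0$ in $CH^2(D(2))/2$ itself, since $\pi^*$ has a nontrivial kernel in Chow-degree two coming from the central extension. I would close this gap by a dimension argument: the cycle map $CH^2(D(2))/2\to H^4(D(2);\bZ/2)$ lands in the Chern subring $\bZ/2[y_1,\dots,y_4,c_4]/(q_0',q_1')$, which has dimension nine in Chow-degree two, and its kernel contains the class $c_2(\eta)-P$ with $P=y_1^2+y_2^2+y_3^2+y_4^2+y_1y_3+y_1y_4+y_2y_3+y_2y_4$ computed from $\pi^*\eta\cong\rho_1\otimes\rho_2$ via $c_2(V\otimes W)\equiv c_1(V)^2+c_1(V)c_1(W)+c_1(W)^2\pmod 2$. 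If both $q_0'$ and $c_2(\eta)-P$ were nonzero, the cycle-map kernel would have dimension at least two, so the total dimension of $CH^2(D(2))/2$ would exceed ten, contradicting the generator bound; combined with the expected nonvanishing of $c_2(\eta)-P$ (the nilpotent class $t''$ announced in Theorem 1.1), this forces $q_0'=0$.

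The direct-sum splitting then reads off: the Chern-subring image of the cycle map supplies the nine-dimensional $\bZ/2\{y_iy_j\}/(q_0')$ summand, and the one-dimensional kernel generated by $c_2(\eta)-P$ supplies the complementary $\bZ/2\{c_2(\eta)\}$ summand.
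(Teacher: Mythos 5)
Your reduction of the generators to the eleven classes $y_iy_j$ ($1\le i\le j\le 4$) and $c_2(\eta)$ via Riemann--Roch matches the paper, but the argument for the relation $q_0'=0$ has a genuine gap, and it occurs exactly where you flag "the chief obstacle." Your proposed dimension count does not close: you bound $\dim_{\bZ/2}CH^2(D(2))/2$ by \emph{eleven}, the image of the mod $2$ cycle map is the nine-dimensional degree-$4$ part of the Chern subring, and in the scenario you want to exclude (both $q_0'\ne 0$ and $c_2(\eta)-P\ne 0$ in the kernel) the total dimension would be $9+2=11$ --- which does not exceed your generator bound, so there is no contradiction. Moreover, the auxiliary input you invoke, namely the nonvanishing of the nilpotent class $t''=c_2(\eta)-P$, is circular: that nonvanishing is part of Theorem 1.1, whose proof rests on this lemma, and the paper establishes it only afterwards by restricting to $N'=\la a_1a_2\ra\cong\bZ/4$. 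More fundamentally, the mod $2$ cycle map cannot decide whether $q_0'$ vanishes, precisely because its kernel is nontrivial in this degree (that is the whole point of $t''$); and the pullback $\pi^*$ along $D_8\times D_8\to D(2)$ kills the individual class $y_1y_2$, which is \emph{nonzero} in $CH^2(D(2))/2$ by the very statement of the lemma, so $\pi^*(q_0')=0$ carries no information about $q_0'$ itself.

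The missing ingredient is the paper's use of \emph{integral} data: Totaro's theorem that the integral cycle map $cl_{int}:CH^2(X)_{(2)}\to H^4(X;\bZ_{(2)})$ is injective, combined with the Harada--Kono computation of $H^4(D(2);\bZ_{(2)})$ (the quotient by the ideal $J_Q$ coming from $H^*(Q)$ is $\bZ/8$ on a generator $w_4$ with $w_4|_{N'}=(u')^2$). Since $q_0'$ is $2$-torsion while the relevant summand is $\bZ/8$, one gets $cl_{int}(q_0')=4\lambda w_4$, and the restriction $c_2(\eta)|_{N'}=-2(u')^2$ identifies this as $-2\lambda c_2(\eta)$, which is zero mod $2$; injectivity of $cl_{int}$ then transports the relation back to the Chow group. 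The direct-sum decomposition also follows from this integral analysis rather than from a counting argument. If you want to keep your framework, you would need to replace the mod $2$ cycle map by the integral one and import the Harada--Kono structure of $H^4(D(2);\bZ_{(2)})$; without that, neither the relation $q_0'=0$ nor the exact dimension ten of $CH^2(D(2))/2$ is established.
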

\begin{proof}
By Totaro (Corollary 3.5 in [To1] or Lemma 15.1 in [To2]), 
the integral cycle map
\[cl_{int}: CH^2(X)_{(2)}\to H^4(X;\bZ_{(2)})\]
is injective.  The higher $2$-torsion of the integral cohomology of 
extraspecial $2$-groups are studied by Harada-Kono 
([Ha-Ko], [Sc-Ya1]).
Let $C(2)^*=H^*(D(2))/J_Q$ where $J_Q$ is the ideal generated by the 
image of $H^*(Q)$ in $H^*(D(2))$ (for $Q\cong (\bZ/2)^4$). Then
Harada-Kono show  that 
\[\bZ/2^{s(*)}\cong C(2)^*\subset H^*(D(2)),\]
and  when $*=4m$, we have  $C(2)^*\cong \bZ/8$.
Let $w_4$ be a generator of $C(2)^4$.  Then it is known
\[ w_4|N=u^2\quad (w_4|N'=(u')^2\ \ for\ N'=\la a_1a_2\ra \cong \bZ/4)\]
identifying $H^*(N)\cong \bZ[u]/(2u)$ and $H^*(N')\cong
\bZ[u']/(4u')$. 

On the other hand, all elements in $J_Q$ are just $2$-torsion.
Moreover $q_0'$ is (zero or) just $2$-torsion (since so are $y_i$).
Therefore we get
\[ cl_{int}(q_0')=4\lambda w_4\quad 
for \ some \ \lambda \in \bZ/8.\]

Let $c(\eta)=\sum c_i(\eta)$ is the total Chern class.
Then we see
\[c(\eta)|_{N'}=(1+u')^4=1+4u'+6(u')^2+...\quad mod(8).\]
Hence $c_2(\eta)|_{N'}=-2(u')^2$ and so
 $q_0'=-2\lambda c_2(\eta)$.
\end{proof}

 We recall a theorem of Totaro.
\begin{thm}
(Theorem 11.1 in [To2])
Let $P$ be a $p$-group  such that $P$ has a faithful complex representation of
dimension at most $p+2$.
Then the mod $p$ Chow ring of $BP$ consists of transferred Euler classes.
\end{thm}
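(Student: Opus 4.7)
The plan is to reduce every mod $p$ Chow class on $BP$ to a sum of transferred top Chern classes, by combining Totaro's general generation theorem with the splitting principle and then using the dimension bound to make the reduction terminate.

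First, I would invoke the known fact (from Totaro's earlier work, cf.\ [To2]) that for any finite group $P$, $CH^*(BP)/p$ is generated additively by transfers $\mathrm{tr}_H^P(c_i(\rho))$, where $H\leq P$ and $\rho$ is a complex representation of $H$. The task then reduces to rewriting each intermediate Chern class $c_i(\rho)$ with $i<\dim\rho$ inside a transfer as a combination of top Chern classes, i.e.\ Euler classes.

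Second, I would reduce via splitting: restrict $\rho$ to a subgroup $H'\leq H$ over which $\rho=\bigoplus_j \rho_j$ decomposes into smaller pieces. Then $c_i(\rho)|_{H'}$ is an elementary symmetric polynomial in the top Chern classes $c_{\dim\rho_j}(\rho_j)$, and a product $\prod_j c_{\dim\rho_j}(\rho_j)$ equals the Euler class of $\bigoplus_j \rho_j$. Combined with the double coset formula and the projection formula, $\mathrm{tr}_H^P(c_i(\rho))$ is thereby rewritten as a sum of transfers of Euler classes from smaller subgroups, setting up an induction on either $\dim \rho$ or the length of a composition series for $\rho$.

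Third, I would apply the hypothesis $\dim \rho_0\leq p+2$ for the given faithful $\rho_0$. Since irreducible complex representations of $p$-groups have $p$-power dimension, the irreducible constituents of $\rho_0$ (and hence the representations one has to handle at every stage) have dimension $1$ or $p$. By Clifford theory each $p$-dimensional irreducible is induced from a linear character of an index-$p$ subgroup, and Fulton's formula for Chern classes of an induced bundle expresses all its Chern classes in terms of data on that subgroup. Iterating drives the argument down to abelian subgroups, where every Chern class is a product of first Chern classes, and hence automatically the top Chern class of a direct sum of line bundles, i.e.\ an Euler class.

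The main obstacle will be controlling the \emph{middle} Chern classes $c_i$ with $1<i<\dim\rho$ of an induced $p$-dimensional irreducible: the induced-bundle formula spawns cross-terms indexed by double cosets and permutations, and one must verify that mod $p$ these always repackage as transferred Euler classes rather than as new intermediate Chern classes. The bound $\dim\rho\leq p+2$ is decisive here — it keeps the combinatorics narrow enough for Frobenius-type identities $(x+y)^p\equiv x^p+y^p \pmod p$ to collapse the surviving expressions into the required top-class form, and it prevents the inductive reduction from producing configurations that lie outside this closed class of transferred Euler classes.
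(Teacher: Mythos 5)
The paper does not actually prove this statement: it is quoted verbatim as Theorem 11.1 of Totaro's book [To2] and used as a black box, so there is no in-paper proof to compare yours against. Judged on its own merits, your sketch has at least two genuine gaps. The central one is the ``splitting'' step: restricting $\rho$ to a proper subgroup $H'\le H$ over which it decomposes does not let you rewrite $\mathrm{tr}_H^P(c_i(\rho))$, because the only way back up is $\mathrm{tr}_{H'}^H\circ\mathrm{res}_{H'}^H=[H:H']$, which vanishes mod $p$ whenever $H'\ne H$. The genuine splitting principle requires passing to a flag bundle over $BH$, which is not the classifying space of a subgroup of $P$, so the classes produced there are not a priori transferred Euler classes of subgroups --- this is precisely the difficulty the theorem is about, and your sketch does not engage with it. (Your starting point is also slightly misstated: Totaro's general generation theorem gives transfers of \emph{products} of Chern classes, not of single Chern classes.)

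Second, your numerical reduction fails at $p=2$: since $p+2=4=p^2$, a faithful representation of dimension at most $p+2$ can have an irreducible constituent of dimension $p^2$, not just $1$ or $p$ --- and this happens for the very group $D(2)=2_+^{1+4}$ studied in this paper, whose faithful representation $\eta=\mathrm{Ind}_H^{D(2)}(e)$ is irreducible of dimension $4$. So the reduction to ``linear characters of index-$p$ subgroups plus the induced-bundle formula'' does not cover the case the paper actually needs. Finally, the closing appeal to $(x+y)^p\equiv x^p+y^p \pmod p$ is not where the bound $p+2$ enters Totaro's argument; his proof rests on degree bounds for generators of $CH^*(BP)/p$ in terms of the dimension of a faithful representation, combined with Riemann--Roch without denominators to control the low-degree part, neither of which appears in your sketch. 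If you want an actual proof, you should follow [To2, Chapters 9--11] rather than the route proposed.
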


First note that Euler classes of $CH^*(D(2))$ are (multiplicatively) generated by $y_1,...,y_4$ and 
$c_4(\eta)$. 
 Next we consider the transfer images. Each proper maximum subgroup $M$ of $D(2)$ is isomorphic to
$D_8\oplus \bZ/2$, and let it be $\la a_1,a_2,c,a_3\ra$.  The Chow ring $CH^*(M)/2$
is generated by Chern classes
\[ y_1,y_2,y_3,\quad and\quad c_2=c_2(\eta')\]
where $\eta'=Ind_{H}^{M}(e)$ and recall that  $e: H=\la c,a_1,a_3\ra \to \bC^{\times}$.
Let us write the transfer
$ t_2=Tr_M^{D(2)}(c_2).$
We note (by the double coset formula)
$t_2|_{N'=\la a_1a_2\ra}=2(u')^2$  identifying
 $CH^*(N')\cong \bZ[u']/(4u')$ where $N'\cong \bZ/4$.
Therefore $t_2=c_2(\eta)\ mod(y_iy_j)$ in $CH^*(D(2))/2$
from Lemma 2.1.
Of course
$ Tr_M^{D(2)}(y_ic_2)=y_it_2$ for all $1\le i\le 4.$

 For an other proper 
maximal subgroup $\tilde M$,  we similarly have
 the transfer $\tilde t_2$. However we have
\[ \tilde t_2=c_2(\eta)=t_2\quad mod(y_iy_j).\]

From the Totaro theorem (Theorem 2.2), we have ;
\begin{lemma}
The mod $2$ Chow ring $CH^*(D(2))$ is multilpicatively generated by
  $ y_1,...,y_4,$ $c_4=c_4(\eta)$ and $t_2$ (or $c_2(\eta))$.
\end{lemma}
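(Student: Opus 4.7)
The plan is to apply Totaro's Theorem 2.2 combined with transitivity of transfer and Frobenius reciprocity. First I would verify the hypothesis of Theorem 2.2: the induced representation $\eta=\mathrm{Ind}_H^{D(2)}(e)$ is faithful of dimension $4=p+2$, so the theorem applies and $CH^*(D(2))/2$ is generated additively by transferred Euler classes $\mathrm{Tr}_K^{D(2)}(c_{\mathrm{top}}(\rho))$ where $K$ ranges over subgroups of $D(2)$ and $\rho$ over complex representations of $K$. By transitivity of transfer, this reduces at once to (a) Euler classes of representations of $D(2)$ itself and (b) transfers from proper maximal subgroups $M\le D(2)$.

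For (a), the classification of irreducible complex representations of extraspecial $2$-groups recalled in the preceding discussion (each is either linear or, up to tensoring with a character, the $4$-dimensional induced representation $\eta$) implies that every top Chern class of a representation of $D(2)$ lies in the subring generated by $y_1,\ldots,y_4$ and $c_4=c_4(\eta)$. For (b), each proper maximal $M$ is isomorphic to $D_8\oplus\bZ/2$, whose mod $2$ Chow ring is multiplicatively generated by $y_1,y_2,y_3$ and $c_2=c_2(\eta')$. Once the congruence $c_2\equiv \mathrm{Res}_M^{D(2)}(c_2(\eta))$ is established modulo the ideal generated by $y_1,\ldots,y_4$, Frobenius reciprocity yields
\[\mathrm{Tr}_M^{D(2)}(c_2^{a}\,y^I)\equiv c_2(\eta)^{a-1}\,t_2\,y^I \pmod{(y_iy_j)},\]
so every such transfer expands as a polynomial in $y_1,\ldots,y_4$, $c_4$ and $t_2$, using the identity $c_2(\eta)\equiv t_2\pmod{(y_iy_j)}$ from Lemma 2.1 and the discussion following it.

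The main obstacle is the Frobenius-reciprocity step: justifying $\mathrm{Res}_M^{D(2)}(c_2(\eta))\equiv c_2$ modulo decomposables. I would mimic the argument of Lemma 2.1 by restricting both sides to the cyclic subgroup $N'=\langle a_1a_2\rangle\cong\bZ/4$, decomposing $\eta|_{N'}$ and $\eta'|_{N'}$ via Mackey's formula, and comparing the resulting second Chern classes inside $CH^*(N')\cong \bZ[u']/(4u')$. A secondary verification, via the double coset formula on $N'$, is that the transfers from the other proper maximal subgroups all coincide with $t_2$ modulo $(y_iy_j)$, so they contribute no further generators.
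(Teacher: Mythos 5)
Your proposal is correct and follows essentially the same route as the paper: Totaro's Theorem 2.2, the Euler classes of $D(2)$ itself accounting for $y_1,\dots,y_4$ and $c_4$, and the transfers from the maximal subgroups $M\cong D_8\oplus \bZ/2$ reduced to $t_2$ via Frobenius reciprocity, the double coset formula, and restriction to $N'\cong \bZ/4$ as in Lemma 2.1. Your explicit appeal to transitivity of transfer to dispose of non-maximal subgroups only makes precise a step the paper leaves implicit.
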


Next we study the nilpotent elements.
Let us write by $cl$ the mod $2$ cycle map
\[cl:CH^*(D(2))/2\to H^*(D(2);\bZ/2).\]
Recall that the Chern subring of $H^*(D(2);\bZ/2)$ is generated by $y_i$ and $c_4(\eta)$.  Since $t_2$ is a Chern class, we can take
$y\in \bZ[y_1,...,y_4]$ such that $cl(t_2)=y\in  H^*(D(2);\bZ/2)$.

Let $t''=t_2-y$ in $CH^*(D(2))$ so that $cl(t'')=0$ and $t''$
is a (nonzero) nilpotent element in $CH^*(D(2))$
because  $Ker(t_{\bC})$ is nilpotent, since   
$t_{\bC}$ is $F$-isomorphic  from the Quillen theorem
for Chow rings [Ya2]. (Note $t''$ is nonzero in $CH^*(D(2))/2$
because $t''|_{N'}=2(u')^2$ and $CH^2(D(2))|_{N'}$ is generated by
$2(u')^2$.)

\begin{lemma}  $y_4t''=0$ in $CH^*(D(2))/2$.
\end{lemma}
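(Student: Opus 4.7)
The plan is to split $y_4 t'' = y_4 t_2 - y_4 y$ and argue that each summand vanishes in $CH^*(D(2))/2$. For the first, I would apply the projection formula to the transfer $t_2 = Tr_M^{D(2)}(c_2)$. The linear character $e_4\colon D(2)\to\bC^\times$ has $M=\langle a_1,a_2,a_3,c\rangle$ in its kernel, so $\Res_M^{D(2)}(y_4)=c_1(e_4|_M)=0$ in $CH^*(M)$, giving
\[
y_4 t_2 \;=\; y_4\cdot Tr_M^{D(2)}(c_2) \;=\; Tr_M^{D(2)}\bigl(\Res(y_4)\cdot c_2\bigr) \;=\; 0
\]
already in $CH^*(D(2))$, hence also in $CH^*(D(2))/2$. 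Applying $cl$ further gives $y_4\,cl(t_2)=y_4 y=0$ in $H^*(D(2);\bZ/2)$.

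Now $y_4 y$ is a polynomial in $y_1,\dots,y_4$, so it lies in the subring $R$ of $CH^*(D(2))/2$ generated by those classes. The relation $q_0'=0$ holds in $R$ by Lemma~2.1, and $q_1'=Sq^2(q_0')=0$ by naturality of Voevodsky's mod $2$ Steenrod operations on Chow; so $R$ is a quotient of $\bZ/2[y_1,\dots,y_4]/(q_0',q_1')$. Composing with the cycle map, the tensor decomposition $Ch(H^*(D(2);\bZ/2))\cong \bZ/2[y_i]/(q_0',q_1')\otimes \bZ/2[c_4]$ recalled in the Introduction identifies the polynomial-in-$y_i$'s part of the Chern subring with $\bZ/2[y_i]/(q_0',q_1')$ itself. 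Hence the composition $\bZ/2[y_i]/(q_0',q_1')\twoheadrightarrow R\hookrightarrow H^*(D(2);\bZ/2)$ is injective, and the vanishing $y_4 y=0$ in $H^*$ forces $y_4 y=0$ in $CH^*(D(2))/2$. Combined with $y_4 t_2=0$, this gives $y_4 t''=0$.

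The main obstacle is the injectivity step: one must rule out any relation among $y_1,\dots,y_4$ in $CH^*(D(2))/2$ beyond $q_0'$ and $q_1'$. This reduces to the analogous injectivity in mod $2$ cohomology, which is provided by the tensor form of the Chern subring. A more hands-on alternative would be to compute $y=cl(t_2)$ explicitly by restriction to the six maximal elementary abelian $2$-subgroups---using that the mod $2$ transfer from an index-$2$ subgroup of a direct product with $C_2$ vanishes, by Gysin applied to the associated double cover---obtaining $y\equiv y_1 y_2+y_1 y_3+y_2 y_3+y_3^2\pmod{q_0'}$, and then verifying directly that $y_4 y$ simplifies to $q_1'$ using $y_1 y_2 = y_3 y_4$, which vanishes in $CH^*(D(2))/2$ by the Steenrod argument above.
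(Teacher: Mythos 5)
Your proposal is correct and follows essentially the same route as the paper: kill $y_4t_2$ by the projection formula using $y_4|_M=0$, then show $y_4y$ lies in $(q_0',q_1')$ by comparing with $H^*(D(2);\bZ/2)$ via the cycle map, and conclude using $q_0'=0$ (Lemma~2.1) and $q_1'=Sq^2(q_0')=0$ in $CH^*(D(2))/2$. The only cosmetic difference is that you deduce $y_4y=0$ in mod $2$ cohomology directly from $cl(y_4t_2)=0$, whereas the paper invokes the nilpotence of $y_4t''$ and the reducedness of $H^*(D(2);\bZ/2)$; your explicit injectivity check for $\bZ/2[y_1,\dots,y_4]/(q_0',q_1')\to H^*(D(2);\bZ/2)$ just makes precise a step the paper leaves implicit.
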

\begin{proof}
Note that
\[y_4t''=y_4(t_2-y)=tr_{M}^{D(2)}(y_4|_M\cdot c_2)-y_4y=-y_4y,\]
where we used $y_4|_M=0$.  Note $y_4t''$ is nilpotent but
$H^*(D(2);\bZ/2)$ has no nonzero nilpotent element.
Hence $y_4y\in (q_0',q_1')$ and also zero in $CH^*(D(2))/2$.
Thus $y_4t''=0$ in $CH^*(D(2))/2$.
(Since $CH^*(X)$ has the reduced power operation $Sq^2$,
we have  $q_1'=Sq^2(q_0')=0$ also in $CH^*(D(2))/2$ [Vo3].)
\end{proof}
\begin{lemma} For all $1\le i\le 4$, we have $y_it''=0$.
\end{lemma}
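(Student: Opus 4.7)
The case $i=4$ is Lemma 2.4. For $i\in\{1,2,3\}$ my plan is to rerun the same argument with the maximal subgroup $M=\la a_1,a_2,c,a_3\ra$ replaced by a maximal subgroup $M_i$ not containing $a_i$, so that $y_i|_{M_i}=0$. Such $M_i\cong D_8\oplus\bZ/2$ exists by symmetry of the presentation of $D(2)$: the permutations $a_1\leftrightarrow a_2$, $a_3\leftrightarrow a_4$, and the swap $(a_1,a_2)\leftrightarrow(a_3,a_4)$ of the two $D_8$-factors all extend to automorphisms of $D(2)$, so $\mathrm{Aut}(D(2))$ acts transitively on $\{a_1,a_2,a_3,a_4\}$. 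Concretely, $M_i$ is the preimage in $D(2)$ of the index-$2$ subgroup of $Q\cong(\bZ/2)^4$ which omits the $a_i$-coordinate.

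Next, I would associate to $M_i$ a transferred second Chern class $t_{2,i}=\mathrm{Tr}_{M_i}^{D(2)}(c_{2,i})$, constructed from $M_i$ in the same way $t_2$ was constructed from $M$ (or simply obtained by transporting $t_2$ through the symmetry above). By the double-coset calculation on $N'\cong\bZ/4$ recorded in the paragraph preceding Lemma 2.3, $t_{2,i}|_{N'}=2(u')^2=t_2|_{N'}$, which together with Lemma 2.1 forces
\[
t_{2,i}\equiv t_2\equiv c_2(\eta)\pmod{(y_jy_k)}\qquad\text{in }CH^*(D(2))/2.
\]
Writing $t_2=t_{2,i}+r_i$ with $r_i$ in the ideal generated by the $y_jy_k$, and applying the projection formula
\[
y_it_{2,i}=\mathrm{Tr}_{M_i}^{D(2)}(y_i|_{M_i}\cdot c_{2,i})=0
\]
(since $y_i|_{M_i}=0$), I would obtain
\[
y_it''=y_i(t_2-y)=y_ir_i-y_iy\;\in\;\bZ/2[y_1,\dots,y_4]\;\subset\;CH^*(D(2))/2.
\]

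From here the argument parallels Lemma 2.4 verbatim. The element $y_it''$ is nilpotent in $CH^*(D(2))/2$ because $t''$ is, so its image under the cycle map $cl$ is a nilpotent element of the Chern subring of $H^*(D(2);\bZ/2)$; since $H^*(D(2);\bZ/2)$ contains no nonzero nilpotent elements, $y_ir_i-y_iy$ lies in $\ker(cl)\cap\bZ/2[y_1,\dots,y_4]=(q_0',q_1')$. By Lemma 2.1 together with $q_1'=Sq^2(q_0')$ and the action of the reduced power on Chow rings, the ideal $(q_0',q_1')$ vanishes in $CH^*(D(2))/2$, and we conclude $y_it''=0$.

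The step I expect to be the main obstacle is the congruence $t_{2,i}\equiv t_2\pmod{(y_jy_k)}$; this is claimed in the paragraph between Lemma 2.2 and Lemma 2.3 and reduces to verifying that the second Chern class of the induced representation associated with $M_i$ restricts to $2(u')^2$ on $N'$, which is the same double-coset input used to construct $t_2$. Once the symmetric role of the four generators is made explicit via $\mathrm{Aut}(D(2))$, this obstacle disappears and the proof reduces to Lemma 2.4.
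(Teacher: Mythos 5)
Your argument is correct in substance but takes a genuinely different route from the paper's. The paper does not rerun the transfer computation at all: it observes that, by Lemmas 2.1 and 2.3, the nilpotent elements of $CH^2(D(2))/2$ form the one-dimensional space $\bZ/2\{t''\}$, so $t''$ is fixed by every automorphism of $D(2)$; applying the automorphisms $f:a_3\leftrightarrow a_4$ and $g:a_1\mapsto a_3,\ a_2\mapsto a_4$ to the relation $y_4t''=0$ of Lemma 2.4 then gives $y_3t''=y_2t''=0$ (and the remaining case by a further such automorphism). Your version instead repeats Lemma 2.4 for each $i$ with a maximal subgroup $M_i$ avoiding $a_i$, which works but obliges you to justify the congruence $t_{2,i}\equiv t_2\ (\mathrm{mod}\ (y_jy_k))$ for each $i$. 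That congruence is indeed asserted in the paper for all maximal subgroups, but your claim that it follows from ``the same double-coset input'' is too quick for $i=1,2$: there $N'=\la a_1a_2\ra$ is \emph{not} contained in $M_i$, so the Mackey formula takes a different shape, namely $t_{2,i}|_{N'}=Tr_{\la c\ra}^{N'}(c_{2,i}|_{\la c\ra})=Tr_{\la c\ra}^{N'}(u^2)=2(u')^2$ --- the right answer, but obtained by transferring up from $\la c\ra$ rather than by restricting a transfer along a subgroup that contains $N'$; and transporting $t_2$ by the automorphism $g$ does not literally reduce to the recorded calculation either, since $g$ moves the detecting subgroup to $\la a_3a_4\ra$. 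The trade-off between the two proofs: the paper's is shorter and needs no new Mackey computations, but leans on the rigidity statement that the degree-$2$ nilradical is exactly $\bZ/2\{t''\}$; yours avoids that input and is more explicit, at the cost of one extra transfer computation per generator.
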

\begin{proof}
In $CH^2(D(2))/2$, nilpotent elements generate just one dimensional $\bZ/
2$-space
$\bZ/2\{t''\}$.  Hence $t''$ is invariant under an action of
the outer automorphism $Out(D(2))$. This outer automorphism contains
\[f: a_3 \leftrightarrow  a_4, c\mapsto c,\qquad
g: a_1\mapsto a_3,\ a_2\mapsto a_4,\ c\mapsto c.\]
We have $0=f^*(y_4t'')=y_3t''$ and $0=g^*(y_4t'')=y_2t''$.
\end{proof}
\begin{lemma}  $(t'')^2=0$ in $CH^*(D(2))/2$.
\end{lemma}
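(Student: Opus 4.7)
The plan is to combine Frobenius reciprocity for the transfer from $M$ with Lemma 2.5, thereby rewriting $(t'')^2$ as a transfer of a nilpotent element of $CH^*(M)/2$, and then to use the absence of nonzero nilpotents in that ring to conclude.

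First I would clear the correction term $y$. Since $y\in\bZ[y_1,\ldots,y_4]$ has positive total degree in the $y_i$'s (Chow degree two), it is a $\bZ$-linear combination of monomials $y_iy_j$, and Lemma 2.5 gives $y_i t''=0$. Hence $y\cdot t''=0$, so
\[
(t'')^2 = t''(t_2-y) = t''\cdot t_2.
\]

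Next I would apply the projection formula to the transfer $t_2=Tr^{D(2)}_M(c_2)$, obtaining
\[
(t'')^2 = t''\cdot Tr^{D(2)}_M(c_2) = Tr^{D(2)}_M\bigl(Res^{D(2)}_M(t'')\cdot c_2\bigr).
\]
This is the same Frobenius-type manipulation already employed in the proof of Lemma 2.4.

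Finally I would show $Res^{D(2)}_M(t'')=0$. Since $t''$ is nilpotent in $CH^*(D(2))/2$, its restriction is a nilpotent element of $CH^*(M)/2$, where $M\cong D_8\times\bZ/2$. The projective bundle formula applied to the cellular approximations of $B\bZ/2$ gives a K\"unneth isomorphism
\[
CH^*(M)/2 \cong CH^*(D_8)/2 \otimes \bZ/2[y_3].
\]
By Totaro's computation $CH^*(D_8)$ is generated by Chern classes and embeds via the cycle map into $H^*(D_8;\bZ/2)\cong \bZ/2[x_1,x_2,w]/(x_1x_2)$, which contains no nonzero nilpotents. Hence $CH^*(M)/2$ has no nonzero nilpotents, so $Res^{D(2)}_M(t'')=0$ and therefore $(t'')^2=0$.

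The main obstacle is justifying the vanishing of nilpotents in $CH^*(M)/2$: this requires both the K\"unneth decomposition for $BM$ (which follows from the projective bundle formula) and Totaro's earlier determination of the Chow ring of $D_8$. Once these are in place, the remaining steps reduce to routine applications of the projection formula and Lemma 2.5.
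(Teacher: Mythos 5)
Your argument is correct and follows essentially the same route as the paper: kill the correction term $y$ using Lemma 2.5, apply the projection formula to write $(t'')^2$ as $Tr_M^{D(2)}(t''|_M\cdot c_2)$, and conclude from the absence of nonzero nilpotents in $CH^*(M)/2$ that $t''|_M=0$. The only addition is that you justify the nilpotent-freeness of $CH^*(M)/2$ via the K\"unneth decomposition and Totaro's computation of $CH^*(D_8)$, a fact the paper simply asserts.
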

\begin{proof}  We compute
\[ (t'')^2=t''(tr_M^{D(2)}(c_2)-y)=t''tr_M^{D(2)}(c_2)=tr_M^{D(2)}(t''|_{M}\cdot c_2)=0,\]
since $t''|_{M}$ is nilpotent but $CH^*(M)/2$ has no non zero nilpotent element.
\end{proof}
From the above lemmas, we get Theorem 1.1 in the introduction. 

{\bf Remark.}
From Theorem  in [To2], we see the topological nilpotency is
$d_0(CH^*(D(2))/2)\le 3$.
This means $y_iy_jt''=0$. So we see a bit stronger  result  $d_0(CH^*(D(2))/2)=2$
 in the above lemma.

\section{BP-theory}

By Schuster-Yagita [Sc-Ya2], it is known that the Morava $K$-theory 
$K(n)^*(BD(2))$ is generated by even dimensional elements
(see also Schuster [Sc] or Bakladze-Jibradze [Ba-Ji])
 for all  $n\ge 0$.  This implies that
$BP^*(D(2))$ is generated by even dimensional elements, and 
satisfies the condition of the Landweber exact functor theorem.

Moreover $D(2)$ is $K(n)^*$-good, namely,
$K(n)^*(D(2))$ is generated by transferred Euler classes
for all $n$.  It is known ([Ra-Wi-Ya]) that it implies that
$D(2)$ is $BP^*$-good, i.e., $BP^*(D(2)$ is generated also
by transferred Euler classes.

Recall the exact sequence
\[ (*)\quad 0\to M\cong D_8\oplus \bZ/2\to D(2)\to \bZ/2\to 0\]
Here we use notations $M=\la a_1,a_2,c,a_3\ra$ and $\bZ/2\cong\la a_4\ra $ in the following proof.

\begin{proof}[Proof of Theorem 1.2.]
The cycle map is decomposed as 
\[cl: CH^*(X)/2
\stackrel{cl_{BP}}{\to}BP^*(X)\otimes_{BP^*}\bZ/2
\stackrel{\rho}{\to} H^*(X;\bZ/2)\]
where $cl_{BP}$ is the Totaro cyle map and $\rho$ is the Thom map.

By the $BP^*$-goodness of  $D(2)$,
we see that  $cl_{BP}$ is surjective.
 Moreover it is known ([Ya2]) that $cl_{BP}$ is an $F$-isomorphism.
Hence $Ker(cl_{BP})$ is nilpotent.
Thus  it is only need to show
\[\bZ/2[c_4]\{t''\}
\subset BP^*(D(2))\otimes_{BP^*}\bZ/2.\]
(Note that $t''$ exists in $BP^*(D(2)$, but we need 
to see $t''\not =0$  and $t''$ generates a $\bZ/2[c_4]$-free module.)

Note $t''|_{N'}=2(u')^2$ and so $t''|_M$ is not a $BP^*$-module generator of $BP^*(M)$ but
$c_2(\eta')\not \in BP^*(M)^{\la a_4\ra}$.  Hence  
$t''|_M$ is a $BP^*$-module generator of $BP^*(M)^{\la a_4\ra}$.
Then we have
\[BP^*(D(2))
\otimes_{BP^*}\bZ/2
\stackrel{res}{\to} 
BP^*(M)^{\la a_4\ra}\otimes_{BP^*}\bZ/2 \supset \bZ/2[c_4]\{t_2''|_M\}.\]
The last inclusion follows from the restriction to $N'=\la a_1a_2\ra\cong \bZ/4$, 
\[ BP^*[c_4](t'')|_{N'}=BP^*[(u')^4]\{2u'\}\subset BP^*(N')\cong BP^*[u']([4](u')).\]
Thus  we have the theorem.
\end{proof}
In this paper, we do not explicitly use the following lemma
and corollary, but we note them. 
\begin{lemma}
The restriction map
\[ res: BP^*(G)\to Lim_{G\supset A:abelian}BP^*(A)\]
is an $F$-isomorphism (i.e., its kernel and cokernel
are nilpotent).
\end{lemma}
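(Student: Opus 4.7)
The plan is to reduce to Quillen's classical $F$-isomorphism theorem for mod $p$ ordinary cohomology via the Thom map, and then lift from mod $p$ to integral $BP^*$ using the Landweber filtration by the invariant ideals $I_n=(p,v_1,\ldots,v_{n-1})$.

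First I would set up the commutative square whose top row is the reduction $res\colon BP^*(G)/p\to Lim_{A}\,BP^*(A)/p$, whose bottom row is $res\colon H^*(G;\bZ/p)\to Lim_{A}\,H^*(A;\bZ/p)$, and whose vertical maps are the Thom maps $\rho$. The bottom row is an $F$-isomorphism by Quillen's stratification theorem (elementary abelian subgroups are cofinal among abelian subgroups up to nilpotents), and both vertical maps are $F$-isomorphisms by the $BP$-version of Quillen's theorem established in [Ya2]. A two-out-of-three diagram chase in the category of graded algebras modulo nilpotents then shows that the top row is an $F$-isomorphism.

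Next I would promote this mod $p$ statement to the integral $BP^*$ statement. Because $BP^*(BG)$ and each $BP^*(BA)$ are concentrated in even degrees (for $G=D(2)$ this was recorded at the start of Section 3 via Schuster--Yagita, and for abelian $A$ it is classical), both sides are Landweber flat $BP^*$-modules and in particular $p$-torsion free. A $v_n$-Bockstein induction along the chain $BP^*/I_n$ then propagates the mod $p$ $F$-isomorphism through every layer of the Landweber filtration; passing to the $I$-adic limit recovers the integral conclusion.

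The main obstacle is the cokernel. The kernel is easy: any class killed by $res$ has nilpotent image under $\rho$ by Quillen's theorem, so must itself be nilpotent by the $F$-iso property of the Thom map. Showing that every invariant family in $Lim\,BP^*(A)$ has a power in the image of $res$ is harder. For this I would rely on the $BP^*$-goodness of $G$, namely that $BP^*(BG)$ is generated by transferred Euler classes (established for $D(2)$ in the proof of Theorem 1.2), combined with the double coset formula, which expresses the restriction of a transferred class to an abelian $A$ as a manageable sum of Euler classes on conjugate intersections; matching such sums against a prescribed element of the inverse limit produces the needed lift up to a nilpotent error.
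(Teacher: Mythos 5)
Your proposal diverges from the paper's proof, and the divergence opens two genuine gaps. The paper does not pass through ordinary mod $p$ cohomology at all: it runs Quillen's original arguments from [Qu2] directly in $BP^*$-theory, using an Evens norm (multiplicative transfer) for $BP^*$ to get $F$-surjectivity as in Quillen's Lemma 2.4, and Quillen's argument (3.10) for $F$-injectivity. The first gap in your version is the ``promotion'' from the mod $p$ statement to the integral one. Nilpotence of kernel and cokernel does not lift along the Landweber filtration in any routine way: if $res(x)=0$, the mod $p$ $F$-isomorphism only gives $x^N\in (p,v_1,v_2,\dots)BP^*(BG)$ for some $N$, which is far from nilpotence of $x$ (e.g.\ $p$ itself, or $v_1$ times a permanent class, dies mod $I_\infty$ without being nilpotent), and your ``$v_n$-Bockstein induction / $I$-adic limit'' is not an argument that closes this. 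Worse, the integral statement is genuinely finer than anything visible mod $(p,v_1,\dots)$: the paper's remark immediately after the lemma points out that $\Ker\bigl(BP^*(\bZ/4)\to BP^*(\bZ/2)\bigr)=([2](u'))$ is \emph{not} nilpotent, which is exactly why the limit must range over all abelian subgroups rather than elementary abelian ones. This phenomenon is invisible after reducing to mod $p$ cohomology (where $\bZ/2\subset\bZ/4$ does induce an $F$-isomorphism), so any route through Quillen's stratification for $H^*(-;\bZ/p)$ cannot by itself recover the integral $F$-injectivity.

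The second gap is the cokernel, which you correctly identify as the hard part but do not actually prove. Quillen's $F$-surjectivity argument is intrinsically multiplicative: one needs $p$-th (or $p^n$-th) powers of a prescribed compatible family to land in the image, and the tool that produces them is the norm map, not the additive transfer. Your plan of ``matching sums of transferred Euler classes via the double coset formula against an element of the inverse limit'' gives no mechanism for producing a \emph{power} of a given family, and it also leans on the $BP^*$-goodness of $D(2)$, whereas the lemma (and the paper's proof) is stated for a general finite group $G$. To repair the proposal you would essentially have to import the $BP^*$ Evens norm and rerun Quillen's Lemma 2.4 and (3.10) verbatim in $BP^*$ --- which is precisely the paper's one-line proof.
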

\begin{proof}
We can define the Evens norm for $BP^*$-theory.
Hence $res$ is $F$-surjective from the arguments in the proof of  Lemma 2.4 in [Qu2].  The $F$-injective follows from the arguments (3.10) in page 371 in [Qu2]. 
\end{proof}
Note that $A$ ranges
all abelian subgroups of $G$ for the $F$-injectivity.
In fact, the kernel of $BP^*(\bZ/4)\cong BP^*[u']/([4](u'))\to BP^*(\bZ/2)$ is
the ideal $[2](u')$ which is not nilpotent.
\begin{cor}  $BP^*(D(2))\subset Lim_{D(2)\supset A:abel.
}BP^*(A)$.
\end{cor}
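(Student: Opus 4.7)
The corollary strengthens the $F$-injectivity of Lemma 3.1 to genuine injectivity for $D(2)$. My strategy is to combine Lemma 3.1 with the explicit mod-$2$ description of $BP^*(D(2))$ from Theorems 1.1 and 1.2, and to detect the one remaining family of nilpotents on a single cyclic subgroup.

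First I would invoke Lemma 3.1 to conclude that the kernel of
\[ \rho : BP^*(D(2)) \longrightarrow \varprojlim_{A\subset D(2)\ \mathrm{abelian}} BP^*(A) \]
is a nilpotent ideal. By Theorem 1.2, $BP^*(D(2))\otimes_{BP^*}\bZ/2\cong CH^*(D(2))/2$, and Theorem 1.1 identifies the nilpotent part of the latter with the free $\bZ/2[c_4]$-module on $t''$. So the mod-$2$ image of $\ker(\rho)$ lies in $\bZ/2[c_4]\{t''\}$. Next I would restrict to the cyclic subgroup $N'=\langle a_1a_2\rangle\cong\bZ/4$: from the proof of Theorem 1.2 one has $t''|_{N'}=2(u')^2$ and $c_4|_{N'}=(u')^4$ inside $BP^*(N')\cong BP^*[u']/([4](u'))$, and since $[4](u')$ starts with $4u'-6v_1(u')^2+\cdots$, the iterative relation $2(u')^2\equiv -v_1(u')^3+\cdots$ forces the restrictions $f((u')^4)\cdot 2(u')^2$ to have pairwise distinct leading $u'$-degrees for different nonzero $f\in\bZ/2[c_4]$. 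Hence $\bZ/2[c_4]\{t''\}$ injects into $BP^*(N')/2$, giving $\ker(\rho)\subset 2\cdot BP^*(D(2))$.

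The main obstacle is lifting this mod-$2$ injectivity to genuine injectivity. Here I would exploit that, as recalled at the start of \S 3, $BP^*(D(2))$ is Landweber-exact and generated over $BP^*$ by finitely many even-dimensional transferred Euler classes, so it is $I$-adically separated for $I=(2,v_1,v_2,\dots)$. Iterating the previous detection argument in each layer of the $I$-adic filtration --- the point being that the nontriviality of $2(u')^{4k+2}$ in $BP^*(N')$ survives every successive reduction --- produces a descending chain $\ker(\rho)\subset I^n\cdot BP^*(D(2))$ which collapses to zero by separatedness. Making this iterative filtration argument precise, rather than only appealing to the $F$-isomorphism property of Lemma 3.1, is the technical heart of the proof.
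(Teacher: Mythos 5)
The paper itself gives no argument for this corollary: it is stated as an immediate consequence of Lemma 3.1 (the kernel of $res$ is nilpotent) together with the integral detection of the nilpotent part $BP^*[c_4]\{t''\}$ on $N'=\la a_1a_2\ra\cong\bZ/4$ that is established inside the proof of Theorem 1.2. Your overall architecture --- nilpotency of $\Ker(res)$ from Lemma 3.1, identification of the nilradical mod $I=(2,v_1,v_2,\dots)$ with $\bZ/2[c_4]\{t''\}$ via Theorems 1.1 and 1.2, and detection of that family on $N'$ --- is exactly the intended route.

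However, your detection step contains a genuine error. You claim that $\bZ/2[c_4]\{t''\}$ injects into $BP^*(N')/2$. But $t''|_{N'}=2(u')^2=2\cdot(u')^2$ lies in $2\,BP^*(N')\subset I\cdot BP^*(N')$, so the composite $\bZ/2[c_4]\{t''\}\to BP^*(D(2))\otimes_{BP^*}\bZ/2\to BP^*(N')\otimes_{BP^*}\bZ/2$ (and likewise the map to $BP^*(N')/2$) is the \emph{zero} map, not an injection. The detection used in the proof of Theorem 1.2 is the integral statement that $BP^*[c_4]\{t''\}$ restricts isomorphically onto $BP^*[(u')^4]\{2(u')^2\}\subset BP^*[[u']]/([4](u'))$, where $2(u')^2\notin([4](u'))$; this does not descend to any mod $2$ or mod $I$ quotient of $BP^*(N')$. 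Consequently your conclusion $\Ker(\rho)\subset 2\cdot BP^*(D(2))$ is not established, and the same obstruction recurs at every layer of your proposed $I$-adic induction: knowing $x\equiv f(c_4)t''$ modulo $I\cdot BP^*(D(2))$ and $x|_{N'}=0$ only yields $f((u')^4)\cdot 2(u')^2\in I\cdot BP^*(N')$, which holds automatically for every $f$ and so extracts no information. (Separately, the $I$-adic separatedness of $BP^*(D(2))$ is asserted from Landweber exactness without justification, but that is moot until the detection step is repaired.) To fix the argument you must work integrally: decompose a given $x\in\Ker(res)$ using the $BP^*$-module generators coming from the $BP^*$-goodness of $D(2)$, separate the summand lying in $BP^*[c_4]\{t''\}$ from the part detected on elementary abelian subgroups, and kill the former by the integral restriction to $BP^*(N')$.
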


\section{algebraic cobordism $\Omega^*(P)$}

Let $p$ be a fixed prime number.
For a smooth variety $X$ over the complex field $\bC$, let us write by
\[\Omega^*(X)= MGL^{2*,*}(X)\otimes_{MU^*}BP^*\cong ABP^{2*,*}(X)\]
the ($BP^*$-version of) algebraic cobordism defined by Voevodsky ([Vo1,2]) and
Levine-Morel ([Le-Mo1,2]).  There is a conjecture (Conjecture 1.3)
such that  the realization map induces the isomorphism
$ t_{\bC}: \Omega^*(BG)\cong   BP^*(BG)$
for the classifying space $BG$ of each  algebraic group $G$.

It is known that this conjecture is true for connected groups [Ya2,3]\\
$O_n,\ SO_n, PGL_p,G_2,Spin_7$.
As for finite groups $G$, it is known that  the conjecture is true for
abelian groups and the extraspecial $p$-groups $p_{+}^{1+2},p_{-}^{1+2}$
for all primes [Ya4].  In this section, we show the conjecture for other  $p$-groups.

 We consider a $p$-group $G$ and  its subgroup $M$ of 
index $p^s$, namely, there is the  extension
\[(*)\quad  1\to M\to G\to \bZ/p^s\to 0\]
and consider the induced spectral sequence
\[ E_2^{*,*'}\cong H^*(\bZ/p^s;BP^*(M))\Longrightarrow BP^*(G).\]
Let the right hand side group $\bZ/p^s$ in $(*)$ be generated by
$a$.  Let  $ N=1+a^*+...+(a^{p^s-1})^*$ and recall that
\[ E_2^{*,*'}\cong \begin{cases} Ker(1-a^*)\cong BP^*(M)^{\la a\ra}\quad 
 *=0\\
     Ker(1-a^*)/Im(N)\quad  *=even>0\\
Ker N/Im(1-a^*)\quad *=odd.
\end{cases} \]
We consider the cases that $E_2^{odd,*'}\cong 0$.

\begin{lemma}
Let $G$ be a $p$-group with the extension $(*)$
such that $E_2^{odd,*'}=0$.  Moreover we assume ;

(1)\quad The $mod(p)$ Totaro conjecture holds for $G$, i.e.
\[CH^*(G)/p\cong BP^*(G)\otimes _{BP^*}\bZ/p,\]

(2)\quad The conjecture 1.3 holds for $M$,
i.e.
$t_{\bC}:\Omega^*(M)\cong BP^*(M)$. \\
Then Conjecture 1.3 holds for $G$, namely,
$t_{\bC}:\Omega^*(G)\cong BP^*(G)$.
\end{lemma}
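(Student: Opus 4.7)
The plan is to exploit a parallel spectral sequence for algebraic cobordism. Associated to the extension $(*)$ there should be a Hochschild--Serre type spectral sequence
\[E_2^{s,t}(\Omega) = H^s(\bZ/p^s; \Omega^t(M)) \Longrightarrow \Omega^{s+t}(G),\]
built from the algebraic version of the fibration $BM \to BG \to B\bZ/p^s$ using finite-dimensional motivic approximations to the classifying spaces. The realization map $t_\bC$ is compatible with this construction, so it induces a morphism from this spectral sequence to the $BP$-theoretic one already in play.

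First I would verify that assumption (2) gives an isomorphism on $E_2$ pages, since $H^s(\bZ/p^s;-)$ is functorial in the coefficient module and $t_\bC : \Omega^*(M) \cong BP^*(M)$. In particular $E_2^{odd,*'}(\Omega) = 0$ as well. Next, I would argue collapse of both spectral sequences at $E_2$. The odd-indexed differentials vanish for parity reasons (they emanate from or land in odd columns). For the even-indexed differentials one invokes the standard complex-orientation argument together with the multiplicative structure (module generators live in the column $s=0$), so that all higher differentials are determined by their values on generators and must vanish. The conclusion is an isomorphism $\mathrm{gr}\,t_\bC : \mathrm{gr}\,\Omega^*(G) \cong \mathrm{gr}\,BP^*(G)$ on associated graded modules.

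Finally I would upgrade this graded isomorphism to an honest one. Assumption (1) gives $CH^*(G)/p \cong BP^*(G)\otimes_{BP^*}\bZ/p$; combined with the surjectivity of the cycle map $\Omega^*(G)\to CH^*(G)$ this shows $t_\bC$ is surjective modulo $p$. Since both sides are complete with respect to the filtration coming from the spectral sequence (via the usual Milnor $\lim^1$ sequence on the tower of approximations to $BG$), a Nakayama-style argument promotes this to genuine surjectivity. Injectivity then follows from the collapse step, because $t_\bC$ is already known to be an $F$-isomorphism (Conjecture 1.3 being true rationally for $p$-groups), so any kernel is nilpotent and would be detected at the graded level where we already have an isomorphism.

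The main obstacle is the rigorous construction of the $\Omega$-side spectral sequence with the stated $E_2$-description and the verification that the even higher differentials vanish; once these are in place the Nakayama and $F$-isomorphism arguments are standard. A secondary subtlety is completeness with respect to the filtration, which is needed to lift the associated-graded isomorphism to an honest isomorphism of $BP^*$-modules.
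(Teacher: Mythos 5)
Your proposal hinges on an object the paper deliberately avoids: a Hochschild--Serre type spectral sequence for algebraic cobordism with $E_2^{s,t}=H^s(\bZ/p^s;\Omega^t(M))$ converging to $\Omega^*(G)$. No such spectral sequence is available off the shelf. The topological one used in the paper comes from the genuine fibration $BM\to BG\to B\bZ/p^s$; in the motivic setting there is no comparable fibration theory for these classifying spaces, and even the candidate descent/homotopy-fixed-point constructions leave both the identification of the $E_2$-term as group cohomology and convergence (your own completeness and $\lim^1$ worries) unresolved. You flag this as ``the main obstacle,'' but it is not a technicality to be filled in later --- it is the entire content of the lemma, since once you have that spectral sequence and its collapse, the comparison with the $BP$-side is immediate. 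A proof that defers exactly the hard part is not a proof.

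The paper's route is different and more elementary: it uses a spectral sequence only on the $BP^*$ side (where it exists and collapses by the hypothesis $E_2^{odd,*'}=0$), and on the $\Omega^*$ side it works directly. Hypotheses (1) and (2) are used to produce explicit multiplicative generators of $\Omega^*(G)$, namely lifts $\tilde x$ of elements $x\in\Omega^*(M)^{\la a\ra}\cong BP^*(M)^{\la a\ra}$ together with the Euler class $y$. One then filters $\Omega^*(G)$ by powers of the ideal $(y)$, obtaining
$gr\,\Omega^*(G)\cong BP^*(M)^{\la a\ra}\oplus\bigoplus_{i\ge 1}(BP^*(M)^{\la a\ra}/N_i)\{y^i\}$
for an increasing chain of submodules $N_i$. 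The realization map forces $N_i\subset \Img(N)$, and the reverse inclusion $\Img(N)\subset N_1$ comes from the Frobenius reciprocity computation
$y\cdot Tr_M^G(x)=Tr_M^G((y|_M)\cdot x)=0$,
which sandwiches $N_i=\Img(N)$ for all $i$ and matches $gr\,\Omega^*(G)$ with $gr\,BP^*(G)$. This transfer identity is the decisive step of the argument, and it appears nowhere in your proposal. If you want to salvage your approach, you should either construct the motivic spectral sequence rigorously (a substantial undertaking) or replace it, as the paper does, by the generation-plus-filtration argument in which the only input about $\Omega^*(G)$ is a set of generators and the projection formula for transfers.
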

\begin{proof}
Let $y$ be the first Chern class of a nonzero linear representation for 
$G$ ;
$G\to \la a\ra \to \bC^*$.
Then from $E_2^{odd,*'}\cong 0$,  we see \[E_{\infty}^{*,*'}\cong E_{\infty}^{even,*'}\cong E_2^{even,*'}.\]
Hence we get 
\[ grBP^*(G)\cong BP^*(M)^{\la a\ra}\oplus (BP^*(M)^{\la a\ra}/N)[y]^+.\]

On the other hand, from (1), the algebraic cobordism
$\Omega^*(G)$ is also generated by $BP^*(M)^{\la a\ra}$($\cong \Omega^*(M)
^{\la a\ra}$)
and $y\in \Omega^2(G)$.
We consider the filtration defined by the $ideal(y)\subset \Omega^*(M)$.

For $x\in \Omega(M)^{\la a\ra}$, take $\tilde x\in \Omega^*(G)$ with
$\tilde x|_M=x$ (which is only decided with modulo $Ideal(y))$.
(Note we can take $\tilde{Nx}=Tr_M^{G}(x)$.)
Then $\Omega^*(G)$ is additively generated by $\tilde x$ and 
$\tilde xy^i$.
Hence we have 
\[ gr\Omega^*(G)\cong BP^*(M)^{\la a\ra}\oplus \oplus _{i\ge 1}
(BP^*(M)^{\la a\ra}/N_i)\{y^i\}\]
where $N_1\subset N_2\subset...$.
Note  $N_i\subset Im(N)$, since we have the 
cycle map
$gr\Omega^*(G)\to grBP^*(G)$.
Hence we only need to prove $N_1=Im(N)$.

For $x\in \Omega^*(M)$, we see
\[y\tilde N(x)=yTr_M^G(x)=Tr_M^G((y|_M)\cdot x)=0\quad in\ \Omega^*(G).\]
Thus $Im(N)\subset N_1$ and we see $N_i=Im(N)$ for all $i$.
\end{proof} 

For $G=D(2)$, we consider the exact sequence $(*)$
in $\S 3$,  and the induced spectral sequence
converging to $BP^*(D(2))$.
\begin{cor}
If  $E_2^{odd,*'}=0$ for the above spectral sequence,
then   Cojecture 1.3 holds for $D(2)$.
\end{cor}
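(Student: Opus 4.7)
The plan is to specialize Lemma 4.1 to the extension $(*)$ of $\S 3$, where $M \cong D_8 \oplus \bZ/2$, the quotient is $\la a_4 \ra \cong \bZ/2$, and the hypothesis $E_2^{odd,*'} = 0$ of the corollary is precisely the vanishing needed in the lemma. Hypothesis (1) of Lemma 4.1 is exactly Theorem 1.2: the mod $2$ Totaro conjecture $CH^*(D(2))/2 \cong BP^*(D(2))\otimes_{BP^*}\bZ/2$ has already been established in $\S 3$, so nothing further is required there.

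Next I would verify hypothesis (2), that Conjecture 1.3 holds for $M = D_8 \oplus \bZ/2$. Here the plan is to combine two known cases from [Ya4]: Conjecture 1.3 for the extraspecial group $D_8 \cong 2_+^{1+2}$ of order $8$, and Conjecture 1.3 for the abelian factor $\bZ/2$. To pass from the two factors to their direct product, I would invoke a Künneth-type isomorphism for both $BP^*$ and $\Omega^*$ of classifying spaces; this is feasible because $BP^*(BD_8)$ is concentrated in even degrees (parallel to the argument at the start of $\S 3$ for $D(2)$, via $K(n)^*$-goodness and the Landweber exact functor theorem), so the relevant external product is well-behaved on both sides and is compatible with the realization map $t_{\bC}$.

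With hypotheses (1) and (2) in hand and $E_2^{odd,*'} = 0$ given, Lemma 4.1 applies verbatim and yields $t_{\bC}\colon \Omega^*(D(2)) \cong BP^*(D(2))$. The main obstacle I anticipate is the verification of hypothesis (2): propagating Conjecture 1.3 from $D_8$ and $\bZ/2$ to their direct product $D_8 \oplus \bZ/2$ requires a clean Künneth statement for algebraic cobordism of classifying spaces that is compatible with the map to $BP$-theory — a genuine (if mild) technical point rather than a formal consequence of the individual cases cited from [Ya4].
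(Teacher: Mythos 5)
Your proposal follows exactly the route the paper intends: the corollary is simply Lemma 4.1 specialized to the extension $(*)$ of $\S 3$, with hypothesis (1) supplied by Theorem 1.2 and hypothesis (2) by the known cases of Conjecture 1.3 from [Ya4]. Your explicit flagging of the K\"unneth step needed to pass from $D_8$ and $\bZ/2$ to $M\cong D_8\oplus\bZ/2$ is a point the paper leaves entirely implicit, and it is worth the care you give it, but it does not change the argument's structure.
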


Next we consider groups $P$ with $rank_p=2$ and $p\ge 3$. 
At first, we consider a split metacyclic group.  It is written
\[P=M(\ell,m,n)=\la a,b|a^{p^m}=b^{p^n}=1, [a,b]=a^{p^{\ell}}\ra\]
for $m>\ell\ge max(m-n,1)$.  Consider the extension
\[ 1\to \la a\ra \to P \to \la b\ra \to 1.\]
Then this extension satisfies the assumption in Lemma 4.1
except for (1) ([Te-Ya2]) and 
$BP^*(P)\otimes_{BP^*}\bZ_{(p)}\cong H^{even}(P;\bZ_{(p)}).$
Moreover when $m-\ell=1$,  Totaro showed
the above cohomology is isomorphic to the Chow ring
$CH^*(P)$ [To2].  Therefore we have
\begin{cor}  Conjecture 1.3 holds for $M(m,\ell,n)$ with  $m-\ell=1$.
\end{cor}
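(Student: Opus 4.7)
The plan is to verify the hypotheses of Lemma 4.1 for the split extension
\[
1 \to \langle a\rangle \to P=M(\ell,m,n) \to \langle b\rangle \to 1
\]
and then invoke Lemma 4.1. So I would have three things to check: the vanishing of $E_2^{odd,*'}$ in the associated spectral sequence, Conjecture 1.3 for the subgroup $M=\langle a\rangle$, and the mod $p$ Totaro conjecture for $P$.

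First I would handle the two conditions that hold independently of the restriction $m-\ell = 1$. The subgroup $\langle a\rangle \cong \bZ/p^m$ is abelian (in fact cyclic), so Conjecture 1.3 for it is already known (cited in the introduction as the abelian case from [Ya4]); this takes care of hypothesis (2). The vanishing $E_2^{odd,*'}=0$ for the spectral sequence coming from the $\langle b\rangle$-extension above is precisely what [Te-Ya2] establishes for split metacyclic groups, so that hypothesis is available directly.

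Next I would establish hypothesis (1), the mod $p$ Totaro conjecture for $P$, by chaining two cited computations through an obvious base change. From [Te-Ya2] one has the integral isomorphism
\[
BP^*(P)\otimes_{BP^*}\bZ_{(p)} \;\cong\; H^{even}(P;\bZ_{(p)}),
\]
and Totaro's computation in [To2] under the assumption $m-\ell = 1$ gives
\[
CH^*(P) \;\cong\; H^{even}(P;\bZ_{(p)}).
\]
Tensoring both isomorphisms with $\bZ/p$ over $\bZ_{(p)}$ (which is flat), they combine to yield
\[
CH^*(P)/p \;\cong\; BP^*(P)\otimes_{BP^*}\bZ/p,
\]
which is exactly condition (1) of Lemma 4.1. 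With all three hypotheses verified, Lemma 4.1 gives $t_{\bC}\colon \Omega^*(P)\xrightarrow{\cong} BP^*(P)$, proving the corollary.

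There is no real obstacle here: everything is assembly from [Te-Ya2], [To2], the abelian case of Conjecture 1.3, and Lemma 4.1. The only non-automatic step is the mod $p$ reduction yielding hypothesis (1), and even that is just a flat base change along $\bZ_{(p)}\to \bZ/p$; in particular this is where the restriction $m-\ell=1$ enters, since without it Totaro's identification of $CH^*(P)$ with $H^{even}(P;\bZ_{(p)})$ is not available.
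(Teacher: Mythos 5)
Your proposal is correct and follows essentially the same route as the paper: verify the hypotheses of Lemma 4.1 for the extension $1\to\la a\ra\to P\to\la b\ra\to 1$, with $E_2^{odd,*'}=0$ and $BP^*(P)\otimes_{BP^*}\bZ_{(p)}\cong H^{even}(P;\bZ_{(p)})$ from [Te-Ya2], hypothesis (2) from the abelian case, and hypothesis (1) obtained by combining with Totaro's identification $CH^*(P)\cong H^{even}(P;\bZ_{(p)})$ when $m-\ell=1$ and reducing mod $p$. (Your parenthetical that $\bZ/p$ is flat over $\bZ_{(p)}$ is false, but harmless, since tensoring an isomorphism with any module preserves the isomorphism.)
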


 We consider the other $rank_pP=2$ groups.
For $p\ge 5$,  groups $P$ with $rank_pP=2$ are classified by Blackburn (see [Ya1] ).
They are metacyclic groups,   and some groups $C(r)$, $G(r',e)$.
The group $C(r),\ r\ge 3$ is defined by
\[ C(r)=\la a,b,a| a^p=b^p=c^{p^{r-2}}=1,\ [a,b]=c^{p^{r-3}}\ra\]
for $r\ge 3$ so that $C(3)=p_+^{1+2}$. 
  The group $G=G(r,e), r\ge 4$ (and $e\not =0$ is a quadratic nonresidue mod $p$) is defined as
\[\la a,b,c|a^p=b^p=c^{p^{r-2}}=[b,c]=1, [a,b^{-1}]=c^{ep^{r-3}},[a,c]=b\ra.\]
The subgroup $\la a,b,c^{p}\ra$ is isomorphic to $C(r-1)$.
\begin{cor}  Conjecture 1.3 holds for $C(r),D(r+1,e)$.\end{cor}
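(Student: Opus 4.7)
The plan is to apply Lemma 4.1 inductively. For $C(r)$, take $M_r = \la b, c\ra$; since $c$ is central of order $p^{r-2}$ and $b$ has order $p$, this subgroup is abelian, isomorphic to $\bZ/p \times \bZ/p^{r-2}$, and normal because $aba^{-1} = bc^{p^{r-3}} \in M_r$. The quotient is $\la a\ra \cong \bZ/p$, so this fits the extension $(*)$ of Lemma 4.1 with $s=1$. Hypothesis (2) is immediate since Conjecture 1.3 holds for abelian groups by [Ya4]. For $G(r,e)$ take the normal index-$p$ subgroup $\la a, b, c^p\ra \cong C(r-1)$; hypothesis (2) of Lemma 4.1 is then supplied by the case of $C(r-1)$ already treated, so the argument is by induction on $r$.

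Hypothesis (1)—the mod $p$ Totaro conjecture $CH^*(G)/p \cong BP^*(G)\otimes_{BP^*}\bZ/p$—must be read off from the known rank-two $p$-group calculations, e.g.\ [Ya1] and [To2]. For both $G = C(r)$ and $G = G(r,e)$, both sides are generated by Chern classes and transferred Euler classes via Theorem 2.2, and the additive and multiplicative structures match.

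The remaining condition, and the main obstacle, is the vanishing $E_2^{odd,*'} = 0$ in the spectral sequence $H^*(\bZ/p;\, BP^*(M)) \Rightarrow BP^*(G)$. Using the identification recalled before Lemma 4.1, this amounts to checking $\Ker N = \Img(1-a^*)$ on the odd-degree side of $BP^*(M)$ under the relevant $\la a\ra$-action. For $C(r)$ the action sends $b \mapsto bc^{p^{r-3}}$ and fixes $c$, so one analyzes the $\bZ/p$-cohomology of $BP^*[[u_b, u_c]]/([p](u_b), [p^{r-2}](u_c))$ with that translation-by-a-central-$p$-power action; for $G(r,e)$ an analogous computation is needed on $BP^*(C(r-1))$, using the additive structure just established for the latter. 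I expect this verification to demand the most care, but the relevant computations have essentially appeared in [Ya1] in the course of determining these $BP^*$-groups additively, so the present task is largely to reorganize that information as a vanishing of the odd $E_2$-rows. Once this is done, Lemma 4.1 (or equivalently Corollary 4.2 for $D(2)$-style extensions) immediately yields Conjecture 1.3 for $C(r)$ and $G(r,e)$.
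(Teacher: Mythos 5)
Your proposal follows essentially the same route as the paper: hypothesis (1) is read off from [To2] and [Ya1] via $CH^*(P)/p\cong H^{even}(P;\bZ)/p\cong BP^*(P)\otimes_{BP^*}\bZ/p$, the extension by an abelian index-$p$ subgroup (the paper takes $\la c,a\ra$ with quotient $\la b\ra$, which is symmetric to your $\la b,c\ra$) handles $C(r)$, the subgroup $\la a,b,c^p\ra\cong C(r)$ handles $G(r+1,e)$ inductively, and the vanishing $E_2^{odd,*'}=0$ is delegated to the computations in [Ya1], exactly as the paper does. This is correct and matches the paper's argument.
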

\begin{proof}
It is known
$ CH^*(P)/p\cong H^{even}(P;\bZ)/p\cong BP^*(P)\otimes_{BP^*}\bZ/p.$ 
Here the first isomorphism is proved in [To2] and the second is shown in [Ya1].
The extension 
\[ 1\to \la c,a\ra \to (r)\to \la b\ra \to 1\]
satisfies [Ya1] the assumption Lemma 4.1 for $C(r)$
The extension
\[1\to \la a,b,c^p\ra \to G(r+1,e)\to \la c\ra \to 1\]
satisfies [Ya1] the assumption of Lemma 4.1  for $G(r+1,e)$.
\end{proof}
We write down the result for $p$-Sylow subgroups $\bZ/p\wr...\wr\bZ/p$ of symmetric groups.
Here $\bZ/p\wr X=\bZ/p \rtimes (X)^{\times p}$ is the $p$-th wreath product.
\begin{cor}  Conjecture 1.3 holds for  $\bZ/p\wr... \wr\bZ/p$.
\end{cor}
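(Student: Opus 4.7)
The plan is to induct on the wreath depth. Write $W_n := \bZ/p \wr \cdots \wr \bZ/p$ ($n$ factors), so $W_1 = \bZ/p$ is abelian and Conjecture 1.3 holds by [Ya4]. For the inductive step I apply Lemma 4.1 to the split extension
\[1 \to W_{n-1}^{\times p} \to W_n \to \bZ/p \to 1,\]
in which the quotient $\bZ/p$ permutes the factors of $W_{n-1}^{\times p}$ cyclically.

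The three hypotheses of Lemma 4.1 are discharged as follows. Hypothesis (2), Conjecture 1.3 for $M = W_{n-1}^{\times p}$, follows from the inductive hypothesis on $W_{n-1}$ together with the K\"unneth decompositions $\Omega^*(B(W_{n-1}^{\times p})) \cong \Omega^*(BW_{n-1})^{\otimes p}$ and $BP^*(B(W_{n-1}^{\times p})) \cong BP^*(BW_{n-1})^{\otimes p}$, under which $t_{\bC}$ is the $p$-fold tensor of isomorphisms. Hypothesis (1), the mod $p$ Totaro conjecture for $W_n$, I would cite from [To2] combined with the even-generation of $K(n)^*(BW_n)$ for all $n \geq 0$, which propagates up the wreath tower from the abelian base case in the style of [Sc-Ya2]; even-generation makes $BP^*(W_n)$ Landweber exact, and together with the $F$-isomorphism result of [Ya2] this forces the Totaro cycle map $CH^*(W_n)/p \to BP^*(W_n)\otimes_{BP^*}\bZ/p$ to be an isomorphism.

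The crux is the vanishing $E_2^{\mathrm{odd},*'} = 0$ for the Hochschild--Serre spectral sequence
\[E_2^{*,*'} \cong H^*(\bZ/p;\, BP^*(W_{n-1})^{\otimes p}) \Longrightarrow BP^*(W_n).\]
I would partition a $BP^*$-basis of the coefficient module $BP^*(W_{n-1})^{\otimes p}$ into diagonal tensors (fixed by the cyclic permutation) and non-diagonal orbits of size $p$. This yields a $\bZ_{(p)}[\bZ/p]$-module decomposition into the trivial module $BP^*(W_{n-1})$ on the diagonal, plus a direct sum of free $\bZ_{(p)}[\bZ/p]$-modules on the non-diagonal orbits. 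The free summands contribute nothing to $H^{>0}$, so only the diagonal matters; since $H^{\mathrm{odd}}(\bZ/p;\, \bZ_{(p)}) = 0$, odd cohomology vanishes provided $BP^*(W_{n-1})$ is $\bZ_{(p)}$-torsion free. The main obstacle is propagating this torsion-freeness through the induction: one must verify that the Landweber exactness and even-generation of $BP^*(W_{n-1})$ survive the collapse of the previous stage of the spectral sequence, which is where the genuine inductive bookkeeping is needed before Lemma 4.1 can be invoked to conclude $\Omega^*(W_n) \cong BP^*(W_n)$.
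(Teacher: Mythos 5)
Your overall architecture -- induct on the wreath depth and feed the extension $1\to W_{n-1}^{\times p}\to W_n\to \bZ/p\to 1$ into Lemma 4.1 -- is exactly the paper's, but the two places where you supply your own justifications both have genuine gaps. The crucial one is the vanishing $E_2^{\mathrm{odd},*'}=0$. Your ``diagonal tensors plus free orbits'' decomposition of $BP^*(W_{n-1})^{\otimes p}$ as a $\bZ_{(p)}[\bZ/p]$-module presupposes a $BP^*$-basis of $BP^*(W_{n-1})$, and no such basis exists: already $\widetilde{BP}^*(B\bZ/p)\cong \widetilde{BP}^*[[u]]/([p](u))$ is a non-free $BP^*$-module, and $BP^*(BW_{n-1})$ only gets worse. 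With generators and relations rather than a basis, the cyclic group permutes the relations as well, and the clean splitting into a trivial summand plus an induced (free) summand breaks down; this is precisely the difficulty that the paper's cited Lemma 5.3 of [Te-Ya2] is designed to circumvent, by imposing the condition that there exist $BP^*$-module \emph{generators} $\{x_i\}$ of $BP^*(G')$ whose mod $p$ reductions $\{\rho(x_i)\}$ form part of a $\bZ/p$-basis of $H^*(G';\bZ/p)$ -- the mod $p$ basis is the surrogate for the missing $BP^*$-basis, and verifying that this condition propagates up the tower is the actual inductive content. Your closing criterion (``odd cohomology vanishes provided $BP^*(W_{n-1})$ is $\bZ_{(p)}$-torsion free'') is therefore not the right invariant to track, and you have correctly sensed but not closed the Künneth issue for $BP^*$ of the $p$-fold product.

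The second gap is hypothesis (1). Your chain ``even Morava $K$-theories $\Rightarrow$ Landweber exactness $\Rightarrow$ (with the $F$-isomorphism of [Ya2]) the Totaro cycle map is an isomorphism'' does not follow: those facts give surjectivity of $cl_{BP}$ and nilpotence of its kernel, but not injectivity -- the paper's own proof of Theorem 1.2 for $D(2)$ needed a separate explicit argument (restriction to $N'\cong\bZ/4$) to rule out a nonzero nilpotent kernel even with all of those facts in hand. The paper avoids this entirely by quoting [To1], where Totaro proves the mod $p$ Totaro conjecture for the iterated wreath products $\bZ/p\wr\cdots\wr\bZ/p$ directly. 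You should do the same rather than re-derive it.
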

\begin{proof}
Totaro's conjecture is still proved in [To1].
We consider the extension
\[1\to (G')^p\to \bZ/p\wr G'\to \bZ/p\to 1\]
and induced spectral sequence converging to
$BP^*(\bZ/p\wr G')$.  It is proved in Lemma 5.3 in [Te-Ya2] that
if there exist $BP^*$-module generators  $\{x_i\}$
of $BP^*(G')$ such that $\{\rho(x_i)\}$ is a subset of $\bZ/p$-basis of
$H^*(G')/p$, then $E_2^{odd,*'}=0$.
By induction on the number of the wreath product, we can show the corollary.
\end{proof}


\begin{thebibliography}{R-W-Y}



\bibitem[Ba-Ji]{Ba-Ji}
M.Bakuradze and M. Jibradze.
\newblock  Morava $K$-theory rings for groups $G_{38},...,G_{41}$ of order $32$.
\newblock \emph{J. K-theory.}
\textbf{13} (2014),  171-198.




\bibitem[Ha-Ko]{Ko-Ha}
M. Harada and A. Kono,
\newblock On the integral cohomology of extraspecial $2$-groups.
\newblock \emph{J. Pure and Applied Algebra.}
\textbf{44} (1987),  215-219.

\bibitem[Ha]{Ha}
M.Hazewinkel,
\newblock  Formal groups and applications,
\newblock \emph{Pure and Applied Math. 78, Academic Press Inc.} 
\textbf{} (1978), xxii+573pp. 




\bibitem[Le-Mo 1]{L-M 1}
M.~Levine and F.~Morel,
\newblock  Cobordisme alg\'ebrique I,
\newblock \emph{C. R. Acad. Sci. Paris } \textbf{332} (2001), 723-728.

\bibitem[Le-Mo 2]{L-M 2}
M.~Levine and F.~Morel,
\newblock  Cobordisme alg\'ebrique II,
\newblock \emph{C. R. Acad. Sci. Paris } \textbf{332} (2001), 815-820.




\bibitem[Qu1]{Qu1}
D. Quillen, 
\newblock The mod $2$ cohomology rings of extra-special $2$-groups and the spinor groups,
\newblock \emph{Math. Ann.}
\textbf{194} (1971), 197-212.

\bibitem[Qu2]{Qu2}
D. Quillen, 
\newblock  A cohomological criterion for $p$-nilpotence.
\newblock \emph{J. of Pure and Applied Algebra.}
\textbf{1} (1971), 361-372.



\bibitem[Ra]{Ra}
D.Ravenel,
\newblock Complex cobordism and stable homotopy groups of spheres, 
\newblock \emph{Pure and Applied Mathematics, 121. Academic Press}
(1986).

\bibitem[Ra-Wi-Ya]{Ra-Wi-Ya}
D.Ravenel, S.Wilson and N.Yagita.
\newblock  Brown-Peterson theory from Morava $K$-theory.
\newblock \emph{15}
(1998), 147-199.









\bibitem[Sc]{Sc}
B. Schuster.
Morava $K$-theory of groups of order $32$.
Algebraic  \& Geometric Toplogy.
{\b 11} (2011),  503-521.


\bibitem[Sc-Ya1]{Sc-Ya}
B. Schuster\ and\ N. Yagita, Transfers of Chern classes in BP-cohomology and Chow rings, 
Trans. Amer. Math. Soc. {\bf 353} (2001),   1039-1054.


\bibitem[Sc-Ya2]{Sc-Ya}
B. Schuster\ and\ N. Yagita, 
Morava $K$-theory of extraspecial $2$-groups.
Proc.  Amer. Math. Soc. {\bf 132} (2003),  1229-1239..








\bibitem[Te-Ya1]{Te-Ya1} 
M.Tezuka and N.Yagita,  
\newblock Cohomology of finite groups and Brown-Peterson cohomology, 
\newblock \emph{Algebraic Topology (Arcata, CA, 1986),  396-408,
Lect. Notes in Math.}   \textbf{1370} (1989).


\bibitem[Te-Ya2]{Te-Ya2} 
M.Tezuka and N.Yagita,  
\newblock Cohomology of finite groups and Brown-Peterson cohomology II, 
\newblock \emph{Homotopy theory and related topics (Kinosaki, 1988),  57-69,
Lect. Notes in Math. }  \textbf{1418} (1990).



\bibitem[To1]{To1} 
B.~Totaro,  
\newblock The Chow ring of classifying spaces,
\newblock \emph{Proc.of Symposia in Pure Math. "Algebraic K-theory" 
(1997:University of Washington,Seattle)} \textbf{67} (1999), 248-281.


\bibitem[To2]{To2} 
B.~Totaro,  
\newblock Group cohomology and algebraic cycles, 
\newblock \emph{Cambridge tracts in Math.  (Cambridge Univ. Press)} \textbf{204} (2014).




\bibitem[Vo1]{Vo1} 
V.~Voevodsky, 
\newblock The Milnor conjecture,
\newblock \emph{www.math.uiuc.edu/K-theory/0170}    (1996).

\bibitem[Vo2]{Vo2} 
V.~Voevodsky, 
\newblock Motivic cohomology with $\bZ/2$ coefficient,
\newblock \emph{Publ. Math. IHES}
\textbf{98} (2003), 59-104.




\bibitem[Ya1]{Ya1}
N.~Yagita,
\newblock  Chomology for groups of $rank_pG=2$ and Brown-Peterson cohomology,
\newblock \emph{J. Math. Soc. Japan}
\textbf{45} (1993), 627-644. 



\bibitem[Ya2]{Ya2}
N.~Yagita,
\newblock  Chow ring of classifying space of extraspecial $p$-groups.
\newblock \emph{Contemp. Math.}
\textbf{293} (2002), 397-400.


\bibitem[Ya3]{Ya3}
N.~Yagita,
\newblock Coniveau filtration of cohomology of groups.
\newblock \emph{Proc. London Math. Soc.}
\textbf{101} (2010), 179-206.

\bibitem[Ya4]{Ya4}
N.~Yagita,
\newblock  Chow rings of nonabelian $p$-groups of order $p^3$,
\newblock \emph{J. Math. Soc. Japan.}
\textbf{64} (2012), 507-531.






\end{thebibliography}
\end{document}